\newcommand{\goth}[1]{\EuFrak{#1}}
\newcommand{\C}{{\bf C}}
\newcommand{\R}{{\bf R}}
\newcommand{\Q}{{\bf Q}}
\newcommand{\Z}{{\bf Z}}
\newcommand{\Bsl}{{\bf SL}}
\newtheorem{thm}{Theorem}
\newtheorem{conj}{Conjecture}
\newtheorem{lem}{Lemma}[section]
\newtheorem{prop}{Proposition} 
\begin{document}
\title[A non-homogeneous orbit closure of a diagonal subgroup]{A non-homogeneous orbit closure of a diagonal subgroup}

\author{Fran\c cois Maucourant} 
\address{Universit\'e Rennes I, IRMAR, Campus de Beaulieu 35042 Rennes cedex -  France}
\email{francois.maucourant@univ-rennes1.fr}


\begin{abstract}
 Let $G=\Bsl(n,\R)$ with $n\geq 6$. We construct examples of lattices $\Gamma \subset G$, subgroups $A$ of the diagonal group $D$ and points $x\in G/\Gamma$ such that the closure of the orbit $Ax$ is not homogeneous and such that the action of $A$ does not factor through the action of a one-parameter non-unipotent group. This contradicts a conjecture of Margulis.
 \end{abstract}
\subjclass[2000]{22E40 37B99}
\maketitle


\section{Introduction}

\subsection{Topological rigidity and related questions}

  Let $G$ be a real Lie group, $\Gamma$ a lattice in $G$, meaning a discrete subgroup of finite covolume, and $A$ a closed connected subgroup. We are interested in the action of $A$ on $G/\Gamma$ by left multiplication; we will restrict ourselves to the topological properties of these actions, referring the reader to \cite{kss} and  \cite{ekl}  for references and recent developments on related measure theoretical problems. 

  Two linked questions arise when one studies continuous actions of topological groups: what are the closed invariant sets, and what are the orbit closures?
   
  In the homogeneous action setting we are considering, there is a class of closed sets that admit a simple description: a closed subset $X\subset G$ is said to be {\it homogeneous} if there exists a closed connected subgroup $H\subset G$ such that $X=Hx$ for some (and hence every) $x\in X$. Let us say that the action of $A$ on $G/\Gamma$ is {\em topologically rigid} if for any $x\in G/\Gamma$, the closure $\overline{Ax}$ of the orbit $Ax$ is homogeneous. 
  
 The most basic example of a topologically rigid action is when $G=\R^n$, $\Gamma=\Z^n$, $A$ any vector subspace of $G$. It turns out that the behavior of elements of $A$ for the adjoint action on the Lie algebra $\goth{g}$ of $G$ plays a important role for our problem. Recall that an element $g\in G$ is said to be ${\bf Ad} $-unipotent if ${\bf  Ad}(g)$ is unipotent, and ${\bf Ad}$-split over $\R$ if ${\bf  Ad}(g)$ is diagonalizable over $\R$. If the closed, connected subgroup $A$ of $G$ is generated by ${\bf Ad}$-unipotent elements, a celebrated theorem of Ratner \cite{rat} asserts that the action of $A$ is always topologically rigid, settling a conjecture due to Raghunathan.
 
 When $A$ is  generated by elements which are ${\bf Ad}$-split over $\R$, much less is known.
 Consider the model case of $G=\Bsl(n,\R)$ and $A$ the group of diagonal matrices with nonnegative entries. If $n=2$, it is easy to produce non-homogeneous orbit closures (see e.g. \cite{lw}); more generally, a similar phenomenon can be observed when $A$ is a one-parameter subgroup of the diagonal group (see \cite{kss}, 4.1). However, for $A$ the full diagonal group, if $n \geq 3$, to the best of our knowledge, the only nontrivial example of a nonhomogeneous $A$-orbit closure is due to Rees, later generalized in \cite{lw}. In an unpublished preprint, Rees exhibited a lattice $\Gamma$ of $G=\Bsl(3,\R)$ and a point $x \in G/\Gamma$ such that for the full diagonal group $A$, the orbit closure $\overline{Ax}$ is not homogeneous. Her construction was based on the following property of the lattice: there exists a $\gamma \in \Gamma\cap A$ such that the centralizer $C_G(\gamma)$ of $\gamma$ is isomorphic to $\Bsl(2,\R) \times \R^*$, and such that $C_G(\gamma)\cap \Gamma$ is, in this product decomposition and up to finite index, $\Gamma_0 \times \langle \gamma \rangle$, where $\Gamma_0$ is a lattice in $\Bsl(2,\R)$ (see \cite{df}, \cite{lw}). Thus in this case the action of $A$ on $C_G(\gamma)/C_G(\gamma)\cap \Gamma$ factors to the action of a 1-parameter non-unipotent subgroup on $\Bsl(2,\R)/\Gamma_0$, which, as we saw, has many non-homogeneous orbits.
 
 Rees' example shows that factor actions of $1$-parameter non-${\bf Ad}$-unipotent  groups are obstructions to the topological rigidity of the action of diagonal subgroups. The following conjecture of Margulis \cite[conjecture 1.1]{mar} (see also \cite[4.4.11]{kss}) essentially states that these are the only ones:

\begin{conj} \label{margulis}
  Let $G$ be a connected Lie group, $\Gamma$ a lattice in $G$, and $A$
  a closed, connected subgroup of $G$ generated by ${\bf Ad}$-split
  over $\R$ elements. Then for any $x\in G/\Gamma$, one of the
  following holds :
\begin{itemize}
\item[(a)]$\overline{Ax}$ is homogeneous, or
\item[(b)] There exists a closed connected subgroup $F$ of $G$ and a
  continuous epimorphism $\phi$ of $F$ onto a Lie group $L$ such that
 \begin{itemize}
 \item $A\subset F$, 
 \item $Fx$ is closed in $G/\Gamma$ , 
 \item $\phi(F_x)$ is closed in $L$, where $F_x$ denotes the
   stabilizer $\{ g \in F | gx=x \}$,
 \item $\phi(A)$ is a one-parameter subgroup of $L$ containing no
   nontrivial ${\bf Ad}_L$-unipotent elements.
\end{itemize}
\end{itemize}
\end{conj}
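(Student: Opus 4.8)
Since the abstract announces that Conjecture~\ref{margulis} is to be \emph{contradicted}, the object to be produced is a triple $(\Gamma,A,x)$ for which alternative (a) fails \emph{and} alternative (b) also fails. The plan is to follow the pattern of Rees' example — descend to a subgroup via a centralizer — but to replace the single copy of $\Bsl(2,\R)$ by a product $\Bsl(2,\R)^3$ equipped with an \emph{irreducible} lattice; irreducibility, which forces the off-diagonal projections of the lattice to be dense, is precisely what will obstruct the factorization demanded in (b).

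\emph{Arithmetic set-up.} I would fix a totally real cubic field $K$ and a non-torsion unit $\alpha\in\mathcal O_K$ generating $K$, and take for $\gamma\in\Bsl(6,\Z)$ the matrix, in a $\Z$-basis of $\mathcal O_K^{2}$, of multiplication by $\alpha$ on the $2$-dimensional $K$-vector space $K^2$. Then $\gamma$ is ${\bf Ad}$-split over $\R$ and its centralizer in $G=\Bsl(6,\R)$ is $C_G(\gamma)=\{(g_1,g_2,g_3)\in\mathrm{GL}_2(\R)^3:\det g_1\det g_2\det g_3=1\}$, a reductive group isogenous to $\Bsl(2,\R)^3$ times a $2$-torus $T$; for the appropriate copy $\Gamma$ of $\Bsl(6,\Z)$ the intersection $C_G(\gamma)\cap\Gamma$ is a lattice of $C_G(\gamma)$ commensurable to $\Lambda\times\Lambda_T$, where $\Lambda=\Bsl(2,\mathcal O_K)$ is an \emph{irreducible} lattice of $\Bsl(2,\R)^3$ and $\Lambda_T$ is a lattice of $T$. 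Conjugate $\gamma$ into the diagonal group $D$ and take $A=D$ (the point is that the image of $A$ in $T$ must not be one-dimensional, which is why a cubic, rather than a quadratic, field is used: $\dim T=2$). The orbit $C_G(\gamma)\cdot(e\Gamma)$ is then closed, and after the evident product decomposition the $D$-action on it is the rank-three Cartan action on $\Bsl(2,\R)^3/\Lambda$ times a (minimal) $2$-torus translation, so $\overline{Dx}$ is non-homogeneous if and only if the corresponding orbit of the rank-three Cartan action on $\Bsl(2,\R)^3/\Lambda$ has non-homogeneous closure.

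\emph{The non-homogeneous orbit closure (the heart).} Higher-rank diagonal actions are widely expected to be topologically rigid on their bounded orbits, so the orbit I build must leave every compact set; the point is that the boundary at infinity of $\Bsl(2,\R)^3/\Lambda$ is large enough that an orbit can diverge in one direction while its closure accumulates, along other directions of escape, onto \emph{several distinct} proper homogeneous pieces. Concretely, I would pick two distinct closed orbits $O_1,O_2$ of the rank-three Cartan (e.g.\ compact ones, arising from $\Q$-anisotropic $\R$-split tori) and construct, by an inductive limiting procedure with escape times $R_m\to\infty$, a point $x$ whose Cartan-orbit shadows $O_1$ ever more closely along one ray in the time-space $\R^3$, shadows $O_2$ along a different ray, and accumulates on nothing else; one must tune the $R_m$ — fast enough that the shadowing survives in the limit, slow enough that the orbit genuinely returns near each $O_i$ — using the two independent ``spare'' directions of the rank-three action to steer between the $O_i$. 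Then $\overline{Dx}=\bigl(\,(\text{the orbit})\cup O_1\cup O_2\,\bigr)\times(\text{torus})$; since the orbit, $O_1\times(\text{torus})$ and $O_2\times(\text{torus})$ are distinct sets all of dimension $\dim D$, no closed connected $H\supseteq D$ can satisfy $Hx=\overline{Dx}$, so (a) fails. This step — producing a \emph{provably} non-homogeneous, divergent orbit closure for a rank-three abelian action, with the shadowing and recurrence estimates controlled in the limit — is the main obstacle.

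\emph{Ruling out (b).} Suppose $F\supseteq A=D$ and $\phi\colon F\to L$ satisfied the first three conditions of (b). Because $\overline{Dx}$ contains the distinct closed orbits $O_i$, it is spread out in the $\Bsl(2,\R)^3$-directions: $F$ must contain a conjugate of $\Bsl(2,\R)^3$, and $Fx$ being closed forces, by the irreducibility of $\Lambda$ — which prevents closing up any proper sub-product of the factors — that $F=C_G(\gamma)$, possibly enlarged along central or unipotent directions which do not affect the argument. Then $\ker\phi$, a closed normal subgroup of $C_G(\gamma)\cong\Bsl(2,\R)^3\times T$, is a product of some of the three simple factors with a subtorus of $T$, giving a trichotomy: if $\phi$ kills at least one but not all of the $\Bsl(2,\R)$-factors then $\phi(\Lambda)$, hence $\phi(F_x)$, is dense in $L$, contradicting the third condition; if $\phi$ is injective on the $\Bsl(2,\R)^3$-part then $\phi(A)$ has dimension $\ge 2$ and is not a one-parameter subgroup, contradicting the fourth; and if $\phi$ kills all three factors then $L$ is a torus, on which ${\bf Ad}_L$ is trivial so that \emph{every} element is ${\bf Ad}_L$-unipotent, whence a one-parameter subgroup of $L$ with no nontrivial ${\bf Ad}_L$-unipotent element must be trivial — again contradicting the fourth condition. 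Hence (b) fails and Conjecture~\ref{margulis} is disproved. The case analysis for $F$ strictly larger than $C_G(\gamma)$ is somewhat delicate, but once the orbit of the previous step is in hand the structural input, irreducibility of $\Lambda$, reduces it to bookkeeping.
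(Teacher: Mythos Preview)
Your arithmetic set-up and the case analysis ruling out (b) via irreducibility of $\Lambda=\Bsl(2,\mathcal O_K)$ are sound in spirit, but the step you label ``the heart'' is a genuine gap, not a technical obstacle. You are asking for a non-homogeneous orbit closure of the \emph{full} rank-three Cartan on $\Bsl(2,\R)^3/\Lambda$ with $\Lambda$ irreducible. Precisely because your own (b)-argument shows that alternative (b) can never hold in that setting (any factorization through a proper sub-product makes $\phi(\Lambda)$ dense, and factoring through the torus makes $L$ abelian), the existence of such an orbit would itself already disprove Conjecture~\ref{margulis} for a \emph{full} Cartan subgroup --- a case that is still open and widely believed to hold, and exactly the regime in which Lindenstrauss--Weiss and Einsiedler--Katok--Lindenstrauss prove positive results. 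The shadowing sketch does not meet this: for a rank-one flow one can build orbits whose closure is a prescribed finite union of closed orbits plus the orbit itself, but for a higher-rank abelian action the commuting directions force recurrence that one does not know how to suppress, and ``accumulates on nothing else'' is precisely the assertion no one can currently arrange. In short, you have reduced the problem to a harder instance of itself.

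The paper avoids this entirely by \emph{not} taking $A$ to be the full diagonal and by working with a \emph{reducible}, not an irreducible, situation. It embeds $G_1\times G_2=\Bsl(n_1,\R)\times\Bsl(n_2,\R)$, $n_i\ge 3$, block-diagonally into $\Bsl(n,\R)$ with the product lattice $\Gamma_1\times\Gamma_2$, and takes for $A_1\subset D_1\times D_2$ the codimension-two subgroup cut out by $a_1 b_1=a_{n_1}b_{n_2}$: every element of $A_1$ that expands the upper-right entry in $G_1$ contracts the upper-right entry in $G_2$, and conversely. Taking $y_i$ to be a unit unipotent push off a compact $D_i$-orbit $\mathcal T_i$, one sees at once that at every time at least one coordinate of $A_1(y_1,y_2)$ stays in the compact set $K_i=h_i([0,1])\mathcal T_i$, so the orbit misses $K_1^c\times K_2^c$ and is not dense. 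On the other hand, the half $s\ge 0$ of $A_1$ already projects onto $\{a_1(s):s\ge0\}\times N_1$ in $G_1$, and the Lindenstrauss--Weiss theorem (this is where $n_i\ge 3$ and the $\Q$-irreducibility of $\mathcal T_i$ enter) makes this dense in $G_1/\Gamma_1$; hence any closed connected $F$ with $F(y_1,y_2)\supset\overline{A_1(y_1,y_2)}$ surjects onto both factors and must be all of $G_1\times G_2$, killing (a). For (b), the only closed connected normal subgroups of $G_1\times G_2$ are sub-products, none of which meets $A_1$ in codimension one, so the one-parameter factorization fails for purely dimensional reasons --- no lattice irreducibility is invoked. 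The mechanism is thus the opposite of yours: a reducible lattice, a \emph{proper} diagonal subgroup with coupled expansion/contraction, and density in each factor supplied by an external theorem rather than by a hand-built shadowing.
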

 
 A first step toward this conjecture has been done by Lindenstrauss and Weiss \cite{lw}, who proved that in the case $G=\Bsl(n,\R)$ and $A$ the full diagonal group, if the closure of a $A$-orbit contains a compact $A$-orbit that satisfy some irrationality conditions, then this closure is homogeneous. See also \cite{tom}. Recently, using an approach based on measure theory, Einsiedler, Katok and Lindenstrauss proved that if moreover $\Gamma=\Bsl(n,\Z)$, then the set of bounded $A$-orbits has Hausdorff dimension $n-1$ \cite[Theorem 10.2]{ekl}.
 
\subsection{Statement of the results} 

 In this article we exhibit some counterexamples to the above conjecture when $G=\Bsl(n,\R)$ for $n\geq 6$ and $A$ is some strict subgroup of the diagonal group of matrices with nonnegative entries. 
 Let $D$ be the diagonal subgroup of $G$; note that $D$ has dimension $n-1$. Our main result is:

\begin{thm} \label{maintheorem} Assume $n\geq 6$.
  \begin{enumerate}
  \item There exists a $(n-3)$ dimensional closed and
      connected subgroup $A$ of $D$, and a point $x \in
    \Bsl(n,\R)/\Bsl(n,\Z)$ such that the closure of the $A$-orbit of
    $x$ satisfies neither condition {\rm (a)} nor condition {\rm (b)}
    of the conjecture.
  \item There exists a lattice $\Gamma$ of $\Bsl(n,\R)$, a
    $(n-2)$ dimensional closed and connected subgroup $A$ of
    $D$ and a point $x \in \Bsl(n,\R)/\Gamma$ such that the closure of
    the $A$-orbit of $x$ satisfies neither condition {\rm (a)} nor
    condition {\rm (b)} of the conjecture.
  \end{enumerate}
\end{thm}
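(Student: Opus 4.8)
The strategy is the one suggested by Rees' construction: find a semisimple $\gamma\in\Gamma$ whose centralizer $C:=C_G(\gamma)$ is the group of real points of a $\Q$-subgroup, arrange $A$ to lie inside a diagonal subgroup of $C$, so that the orbit $Cx$ is closed and $\overline{Ax}\subset Cx\cong C/(C\cap\Gamma)$, and then analyze the $A$-action on this closed orbit. The essential new point, compared with the examples of Rees and of Lindenstrauss--Weiss (which do satisfy (b)), is to make the semisimple part of $C$ contain a factor $H\cong\Bsl(2,\R)\times\Bsl(2,\R)$ on which $C\cap\Gamma$ induces an \emph{irreducible} arithmetic lattice $\Lambda$ — a Hilbert modular group $\Bsl(2,\mathcal O_K)$ for a real quadratic $K$, or a cocompact relative coming from a quaternion $K$-algebra — and, crucially, to have $A$ act on $H/\Lambda$ through the \emph{full two-dimensional} $\R$-split Cartan of $H$, not merely a one-parameter subgroup. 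I expect $n\geq 6$ to be forced because one needs four coordinates for $H$ and at least two more to carry the remaining directions of $A$; and I expect part (2) to require a non-split $\Q$-form of $\Bsl(n)$ instead of $\Bsl(n,\Z)$, this being what lets $A$ gain one more dimension while $H$ is still scaled by $D$ in its two blocks independently.

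Concretely I would proceed as follows. \textbf{Step 1 (arithmetic data).} Construct $K$ (or the quaternion algebra), a $\Q$-embedding of $\mathrm{Res}_{K/\Q}\Bsl(2)$ together with a complementary $\Q$-torus into $\Bsl(n)$, the element $\gamma$, and the subgroup $A\subset D$; check $\gamma\in\Gamma$, $A\subset C$, that $C$ is defined over $\Q$, that the projection of $C\cap\Gamma$ to $H$ is commensurable to the irreducible lattice $\Lambda$, and that $A$ surjects onto the split Cartan of $H$. \textbf{Step 2 (closed orbit and action).} From $C$ being a $\Q$-group deduce $Cx$ closed, identify $C/(C\cap\Gamma)$ up to isogeny and compact factors with $(H/\Lambda)\times(\text{a torus bundle})$, and compute that $A$ acts on $H/\Lambda$ through the split Cartan of $H$ and on the complementary factor by translations. \textbf{Step 3 (non-homogeneity).} Choose the image of $x$ in $H/\Lambda$ so that the closure of its $A$-orbit there is non-homogeneous — for instance by making one of the two coordinate geodesic flows of $H$ have a non-homogeneous Cantor-type orbit closure while the other is periodic, and checking by a dimension/structure argument that the resulting set is not of the form $Hy$ — and pull this back to a non-homogeneous $\overline{Ax}$ in $G/\Gamma$. \textbf{Step 4 (condition (b) fails).} Suppose $F,\phi,L$ witness (b). Since $\dim\phi(A)=1$ and $\dim A\geq 3$, $\ker\phi$ kills all but one direction of $A$; put $N:=\ker\phi\cap H$, a normal subgroup of $H\cong\Bsl(2,\R)^2$, hence up to center trivial, a single factor, or all of $H$. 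If $N$ is trivial, $\phi$ is injective on $H$, so $\phi(A)$ contains the image of the two-dimensional split Cartan of $H$ and $\dim\phi(A)\geq 2$, a contradiction. If $N$ is one factor $\Bsl(2,\R)\times 1$, then $\phi(F_x)$ contains the image of a finite-index subgroup of $\Lambda$ under the projection of $H$ onto its other factor, which is \emph{dense} by irreducibility of $\Lambda$ — contradicting $\phi(F_x)$ closed. If $N=H$, then $\phi$ factors through $F/H$ and $\phi(A)$ lands in the torus part; since a torus acts trivially by $\mathbf{Ad}$, every element of $\phi(A)$ is $\mathbf{Ad}_L$-unipotent, forcing $\phi(A)=\{e\}$, which is excluded. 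One must also verify that any $\Q$-subgroup $F$ with $A\subset F$ and $Fx$ closed contains $H$ (otherwise $Fx$ fails to contain $\overline{Ax}$, or is not closed), so that the three cases are exhaustive. Hence $\overline{Ax}$ satisfies neither (a) nor (b).

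The hard part is the simultaneous compatibility of Steps 1--2 with Step 4: one must keep $A$ inside $D$ yet make it act on $H/\Lambda$ by a genuinely two-dimensional diagonalizable group (if $A$ only induced a one-parameter flow on $H$, projecting $C$ onto $H$ would already display condition (b) with the closed lattice $\Lambda$); one must never produce a \emph{reducible} sub-factor on which $A$ acts by a lone geodesic flow with an honest lattice (again (b)); and one must still reach $\dim A=n-3$, resp.\ $n-2$. Reconciling these constraints — in particular realizing a two-dimensional split-Cartan $A$-action on an irreducible-lattice factor inside a prescribed arithmetic quotient $G/\Gamma$, which is exactly what a naive embedding inside $\Bsl(n,\Z)$ resists — is where the bound $n\geq 6$ and the change of $\Q$-form in part (2) must be used, and is the core of the construction.
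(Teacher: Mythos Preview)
Your approach is genuinely different from the paper's, and it has a real gap at Step~3. You want $\Lambda$ to be an \emph{irreducible} lattice in $H\cong\Bsl(2,\R)\times\Bsl(2,\R)$ (this is what makes your case analysis in Step~4 go through: the ``one factor'' case is killed by density of the projection of $\Lambda$), but then $H/\Lambda$ has no product structure, so your recipe ``make one coordinate geodesic flow have a Cantor-type closure while the other is periodic'' is meaningless. More seriously, producing \emph{any} non-homogeneous orbit closure for the full split Cartan on $\Bsl(2,\R)^2/\Lambda$ with $\Lambda$ irreducible is exactly the kind of higher-rank rigidity statement that the Margulis conjecture predicts should be \emph{impossible}; it is certainly not known how to do this, and if you could, that alone would already be a stronger counterexample than what the theorem asserts. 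So Steps~3 and~4 pull in opposite directions: reducible $\Lambda$ makes Step~3 easy but reinstates condition~(b) via projection to a factor; irreducible $\Lambda$ blocks (b) but leaves you with no mechanism for non-homogeneity. A secondary gap: in Step~4 you restrict to $F$ a $\Q$-subgroup, but condition~(b) allows arbitrary closed connected $F$ with $Fx$ closed, so even granting $H\subset F$ you would need a separate argument (the paper handles this via intersection-of-closed-orbits and Baire-category lemmas).

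For comparison, the paper avoids this tension entirely by working with $\Bsl(n_1,\R)\times\Bsl(n_2,\R)$, $n_i\geq 3$, and a \emph{reducible} (product) lattice $\Gamma_1\times\Gamma_2$. The group $A$ is a codimension-one subgroup of $D_1\times D_2$ cut out by a single relation coupling the expanding direction in one factor to the contracting direction in the other. Non-density follows because the orbit never escapes a set of the form $K_1\times(G_2/\Gamma_2)\,\cup\,(G_1/\Gamma_1)\times K_2$; nevertheless, by Lindenstrauss--Weiss each $D_i y_i$ is dense, which forces any closed connected $F$ with $\overline{A(y_1,y_2)}\subset F(y_1,y_2)$ to equal $G_1\times G_2$. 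Then (b) fails by a dimension count on normal subgroups of $G_1\times G_2$: none meets $A$ (of dimension $n-3\geq 3$) in codimension~one. The point is that the obstruction to (b) comes from the \emph{shape of $A$ inside the product}, not from irreducibility of a lattice; this is what your proposal is missing.
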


  It will be clear from the proofs that these examples however satisfy a third condition:
\begin{itemize}
\item[(c)] 
{\em There exists a closed connected subgroup $F$ of $G$ and two
  continuous epimorphisms $\phi_1,\phi_2$ of $F$ onto Lie groups
  $L_1,L_2$ such that
 \begin{itemize}
 \item $A\subset F$, 
 \item $Fx$ is closed in $G/\Gamma$ , 
 \item For $i=1,2$, $\phi_i(F_x)$ is closed in $L_i$, 
 \item $(\phi_1,\phi_2): F \rightarrow L_1 \times L_2$ is surjective
 \item $(\phi_1,\phi_2): A\rightarrow \phi_1(A)\times \phi_2(A)$ is
   not surjective.
\end{itemize}
}
\end{itemize}

 Construction of these examples is the subject of Section \ref{sketch}, whereas the proof that they satisfy the required properties is postponed to Section \ref{sectionproof}.
      
  \subsection{Toral endomorphisms}
 
  To conclude this introduction, we would like to mention that the idea behind this construction can be also used to yield examples of 'non-homogeneous' orbits for diagonal toral endomorphisms. 
  
  Let $1<p_1<\dots < p_q$, with $q \geq 2$, be integers generating a multiplicative non-lacunary semigroup of $\Z$ (that is, the $\Q$-subspace  $\oplus_{1\leq i \leq q} \Q \log(p_i)$ has dimension at least $2$). We consider the abelian semigroup $\Omega$ of endomorphisms of the torus $T^n=\R^n/\Z^n$ generated by the maps $z \mapsto p_i z\,  \mathrm{mod} \, \Z^n$, $1\leq i \leq q$.
  
   In the one-dimensional situation, described by Furstenberg \cite{fur}, every $\Omega$-orbit is finite or dense. If $n\geq 2$, Berend \cite{ber} showed that minimal sets are the finite orbits of rational points, but there are others obvious closed $\Omega$-invariant sets, namely the orbits of rational affine subspaces.   
  Meiri and Peres \cite{MP} showed that closed invariant sets have integer Hausdorff dimension. 
  
   Note that the study of the orbit of a point lying in a proper rational affine subspace reduces to the study of finitely many orbits in lower dimensional tori, although some care must be taken about the pre-periodic part of the rational affine subspace (for example, if $q=n=2$, and if $\alpha \in T^1$ is irrational with non-dense $p_1$-orbit, the orbit closure of the point $(\alpha,1/p_2)\in T^2$ is the union of a horizontal circle and a finite number of strict closed infinite subsets of some horizontal circles). 
   
   With this last example in mind, Question 5.2 of \cite{MP} can be re-formulated: is a proper closed invariant set necessarily a subset of a finite union of rational affine tori? Or, equivalently, if a point is outside any rational affine subspace, does it necessarily have a dense orbit? It turns out that this is not the case at least for $n\geq 2q$, as the following example shows. 
 
 \begin{thm} \label{toralexample}
 Let $N$ be an integer greater than $q\frac{\log  p_q}{\log p_1}$, and let $z$ be the point in the $2q$-dimensional torus $T^{2q}$ defined by the coordinates modulo 1:
 $$z=(z_1,\dots, z_{2q})=\left(  \sum_{k\geq 1} p_1^{-N^{2k}}, \dots,  \sum_{k\geq 1} p_q^{-N^{2k}}, \sum_{k\geq 1} p_1^{-N^{2k+1}}, \dots, \sum_{k\geq 1} p_q^{-N^{2k+1}} \right).$$
 Then the  point $z \in T^{2q}$ is not contained in any rational affine subspace, but its orbit $\Omega z$ is not dense. 
 \end{thm}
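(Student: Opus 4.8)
\emph{Plan.} I would prove the two assertions separately, the second being the substantial one.

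\emph{No rational affine relation.} The point $z$ lies in a proper rational affine subspace of $T^{2q}$ exactly when, after clearing denominators, there is a relation $\sum_{j=1}^{q}a_jz_j+\sum_{j=1}^{q}b_jz_{q+j}=c$ with $a_j,b_j,c\in\Z$ and not all $a_j,b_j$ zero; I would rule this out by a Liouville-type argument. Multiply such a relation by $P_K:=\prod_{i=1}^{q}p_i^{N^{2K}}$ for large $K$. In $P_Kz_j=\sum_{k\ge1}P_Kp_j^{-N^{2k}}$ the terms with $N^{2k}\le N^{2K}$ are integers, so $P_Kz_j$ is an integer plus a positive tail $\varepsilon_{j,K}$ with leading term $\prod_{i\ne j}p_i^{N^{2K}}\cdot p_j^{N^{2K}-N^{2K+2}}$; likewise $P_Kz_{q+j}$ is an integer plus a tail $\varepsilon_{q+j,K}$ with leading term $\prod_{i\ne j}p_i^{N^{2K}}\cdot p_j^{N^{2K}-N^{2K+1}}$. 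The hypothesis $N>q\frac{\log p_q}{\log p_1}$ makes every $\varepsilon_{\bullet,K}\to0$, so the integer part of the transformed relation vanishes and $\sum_ja_j\varepsilon_{j,K}+\sum_jb_j\varepsilon_{q+j,K}=0$. Since consecutive exponents $N^{2k}$ (resp. $N^{2k+1}$) have ratio tending to infinity, each tail is comparable to its leading term, and comparing leading terms — again using $N>q\frac{\log p_q}{\log p_1}$ — one checks that $\varepsilon_{q+1,K}$ strictly dominates all the others; hence $b_1=0$. Peeling off $b_2,\dots,b_q$, then $a_1,\dots,a_q$ (whose tails are ordered the same way), and finally $c$, one sees the relation is trivial.

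\emph{Non-density of the orbit.} Write $m=\prod_ip_i^{a_i}$, put $\rho_j:=\log_{p_j}m$ and let $N^{r_j}$ be the largest power of $N$ with $N^{r_j}\le\rho_j$. Expanding $mz_j=\sum_kmp_j^{-N^{2k}}$ into integral terms, proper fractions and a super-small tail, one gets: if $\rho_j<N^2$, or if $a_j$ is at least the largest \emph{even} power of $N$ below $\rho_j$, then $\{mz_j\}$ equals, up to an error that is super-small once $m$ is large, the quantity $\Phi_j^{\mathrm{ev}}(\rho_j):=\sum_{N^{2k}>\rho_j}p_j^{\rho_j-N^{2k}}$, which depends on $\rho_j$ alone; call $z_j$ \emph{slaved} then, and define slaving of $z_{q+j}$ analogously with $\Phi_j^{\mathrm{odd}}$ using odd powers. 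The key geometric point is that for any $\rho$ at most one of the next even power and the next odd power of $N$ above $\rho$ lies within $O(1)$ of $\rho$, so at least one of $\Phi_j^{\mathrm{ev}}(\rho),\Phi_j^{\mathrm{odd}}(\rho)$ is super-small; hence whenever both $z_j$ and $z_{q+j}$ are slaved, the pair $(\{mz_j\},\{mz_{q+j}\})$ lies within a fixed $\varepsilon_0$ of the ``cross'' $C:=(T^1\times\{0\})\cup(\{0\}\times T^1)\subset T^2$.

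It remains to find, for every $m$, an index $j_0$ with both $z_{j_0}$ and $z_{q+j_0}$ slaved, and this is where the hypothesis is used decisively. From $\sum_ia_i\log p_i=\log m=\rho_j\log p_j$ the pigeonhole principle gives $j_0$ with $a_{j_0}\ge\rho_{j_0}/q$; since $N>q$ this yields $a_{j_0}>\rho_{j_0}/N\ge N^{r_{j_0}-1}$, so $a_{j_0}$ exceeds the \emph{smaller} of the largest even and largest odd powers of $N$ below $\rho_{j_0}$, i.e. at least one of $z_{j_0},z_{q+j_0}$ is slaved. If only one — say $z_{j_0}$ — is slaved, then $a_{j_0}<N^{r_{j_0}}$, while $\{mz_{j_0}\}\approx\Phi_{j_0}^{\mathrm{ev}}(\rho_{j_0})$ fails to be within $\varepsilon_0$ of $0$ only when $\rho_{j_0}$ is within a constant of the next even power of $N$; combined with $a_{j_0}\ge\rho_{j_0}/q$ and $N>q$ this bounds $N^{r_{j_0}}$, hence $m$. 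Thus, apart from finitely many $m$, $\{mz_{j_0}\}$ is $\varepsilon_0$-close to $0$ and $(\{mz_{j_0}\},\{mz_{q+j_0}\})$ is again $\varepsilon_0$-close to $C$. Therefore $\overline{\Omega z}$ is contained in $\{y\in T^{2q}:(y_j,y_{q+j})$ lies in the closed $\varepsilon_0$-neighbourhood of $C$ for some $j\}$ together with a finite set, and so misses a neighbourhood of a generic point such as $(\tfrac14,\dots,\tfrac14)$; hence $\Omega z$ is not dense. The hard part is exactly this last dichotomy — a finite but careful case analysis of the regimes of $(a_{j_0},\rho_{j_0})$, in each of which one must check that the coordinate pair indexed by $j_0$ stays near $C$ — which is the toral incarnation of the non-surjectivity condition (c) of the Introduction and the precise place where $N>q\frac{\log p_q}{\log p_1}$ (in particular $N>q$) is needed.
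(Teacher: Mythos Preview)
Your argument is correct, but both halves take a substantially longer route than the paper's.

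For rational independence, the paper multiplies the putative relation by $\bigl(\prod_i p_i\bigr)^{N^{2k_0+1}}$ rather than $\prod_i p_i^{N^{2K}}$. With that choice the truncation up to $k_0$ is an integer and the entire tail is bounded by $4q\sup_i(|a_i|,|b_i|)\exp\bigl(N^{2k_0+1}(q\log p_q-N\log p_1)\bigr)\to 0$, so the tail is exactly zero for large $k_0$; differencing in $k_0$ yields $\sum_i a_ip_i^{-N^{2k}}+b_ip_i^{-N^{2k+1}}=0$ for all large $k$, and distinctness of the $p_i$ finishes. Your leading-term peeling works, but the paper's reduction to a single-$k$ identity is cleaner.

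For non-density the paper is much shorter. Given $m=\prod_i p_i^{n_i}$, choose $s$ with $p_s^{n_s}=\max_r p_r^{n_r}$ and set $k_0=\lfloor \log n_s/(2\log N)\rfloor$. If $N^{2k_0}\le n_s\le N^{2k_0+1}$ then
\[
\{m z_s\}\;=\;\prod_i p_i^{n_i}\sum_{k\ge k_0+1}p_s^{-N^{2k}}\;\le\;2\,p_s^{\,qN^{2k_0+1}-N^{2k_0+2}}\;=\;2\,p_s^{\,N^{2k_0+1}(q-N)}\longrightarrow 0,
\]
since $N>q\log p_q/\log p_1>q$; if instead $N^{2k_0+1}\le n_s\le N^{2k_0+2}$, the identical estimate with $z_{s+q}$ applies. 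Hence for every $\epsilon>0$ and all large enough $\sum_i n_i$, some coordinate of $mz$ lies in $[0,\epsilon]$, and $\Omega z$ misses the open cube $(\epsilon,1)^{2q}$. Your machinery of ``slaved'' coordinates, the cross $C$, and the case analysis on whether one or both of $z_{j_0},z_{q+j_0}$ are slaved ultimately reaches exactly this conclusion (some coordinate is near $0$), but the choice of the maximal-contribution index $s$ together with the parity of the $N$-adic interval containing $n_s$ delivers it in a single step, with no exceptional finite set to handle.
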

 
 The proof of Theorem \ref{toralexample} will be the subject of Section \ref{sectionproof2}.
 
\section{Sketch of proof of Theorem \ref{maintheorem}}  
\label{sketch}
 \subsection{The direct product setup}  
 We now describe how these examples are built. 
 Choose two integers $n_1\geq 3$, $n_2\geq 3$, such that $n_1+n_2=n$.
 For $i=1,2$, let $\Gamma_i$ be a lattice in $G_i=\Bsl(n_i,\R)$. 
 
 Let $g_i$ be an element of $G_i$ such that $g_i \Gamma_i g_i^{-1}$ intersects the diagonal subgroup $D_i$ of $\Bsl(n_i,\R)$ in a lattice, in other words $g_i  \Gamma_i$ has a compact $D_i$-orbit; such elements exist, see \cite{PR}. In fact, we will need an additional assumption on $g_i$, namely that the tori $g_i^{-1} D_i g_i$ are {\em irreducible over $\Q$}. The precise definition of this property and the proof of the existence of such a $g_i$, a consequence of a theorem of Prasad and Rapinchuk \cite[Theorem 1]{PRR},  will be the subject of Section \ref{prasadrapinchuk}.
 
 Let $\pi_i : G_i \rightarrow G_i / \Gamma_i$ be the canonical quotient map. Define for $i=1,2$:
 $$y_i=\pi_i \left( 
   \left[ 
  \begin{array}{ccccc}
  1 & 0 & \dots 	& 0 & 1 \\
  0 & 1 & \ddots 	&   & 0\\
  \vdots & \ddots & \ddots & \ddots & \vdots \\
  \vdots &  & \ddots 		& 1 & 0 \\
  0 & \dots & \dots 		& 0 & 1 \\
  \end{array}
  \right]
  g_i \right).$$
  
  The $D_i$-orbit of $y_i$ is dense, by the following argument. It is easily seen that the closure of $D_iy_i$ contains the compact $D_i$-orbit $\mathcal{T}_i=\pi_i(D_ig_i)$. The $\Q$-irreducibility of $\mathcal{T}_i$ is sufficient to show that the assumptions of the theorem of Lindenstrauss and Weiss \cite[Theorem 1.1]{lw} are satisfied (Lemma \ref{noncompact}); thus, by this theorem, we obtain that there exists a group $H_i<G_i$ such that $H_iy_i=\overline{D_iy_i}$. Again because of $\Q$-irreducibility, the group $H_i$ is necessarily the full group, i.e. $H_i=G_i$ (proof of Lemma \ref{where}) 
  \footnote{The reader only interested in the case $n=6$ and $\Gamma=\Bsl(6,\Z)$ might note that when $\Gamma_1=\Gamma_2=\Bsl(3,\Z)$,  \cite[Corollary 1.4]{lw} can be used directly in the proof of Lemma \ref{where}; then the notion of $\Q$-irreducibility becomes unnecessary, and the entire Section \ref{prasadrapinchuk} can be skipped.}.
  
Let $A_1$ be  the $(n-3)$ dimensional subgroup of $G_1\times G_2$ given by:
\begin{equation} \label{definitionofA1}
\begin{split}
 A_1= \bigg\{ & \left( diag(a_1,..,a_{n_1}),diag(b_1,..,b_{n_2}) \right) \; : \\ 
 &  \;  \prod_{i=1}^{n_1} a_i=\prod_{j=1}^{n_2}b_j=\frac{a_1b_1}{a_{n_1}b_{n_2}}=1, \, a_i>0, b_j>0 \bigg\}.
\end{split}
\end{equation} 
   
 Then the $A_1$-orbit of $(y_1,y_2)$ is not dense in $G_1\times G_2/\Gamma_1\times \Gamma_2$ (Lemma \ref{notdense}), but $G_1\times G_2$ is the smallest closed connected subgroup $F$ of $G_1\times G_2$ such that $\overline{A_1(y_1,y_2)}\subset F(y_1,y_2)$ (Lemma \ref{smaller}). 
 
 This yields a counterexample to Conjecture \ref{margulis} which can be summarized as follows:
 
  \begin{prop} \label{intermediateresult}
  For $i=1,2$, let $n_i\geq 3$ and $\Gamma_i$ be a lattice in $G_i=\Bsl(n_i,\R)$. For $A_1$, $y_1,y_2$ depicted as above, the $A_1$-orbit of $(y_1,y_2)$ in $G_1\times G_2/\Gamma_1\times \Gamma_2$ satisfies neither condition (a) nor condition (b) of Conjecture \ref{margulis}. 
  \end{prop}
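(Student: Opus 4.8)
The plan is to derive Proposition~\ref{intermediateresult} from the two facts announced in the sketch: Lemma~\ref{notdense}, that the orbit $A_1(y_1,y_2)$ is not dense in $G_1\times G_2/\Gamma_1\times\Gamma_2$, and Lemma~\ref{smaller}, that $G_1\times G_2$ is the smallest closed connected subgroup $F$ for which $\overline{A_1(y_1,y_2)}\subset F(y_1,y_2)$. Throughout write $G=G_1\times G_2$, $\Gamma=\Gamma_1\times\Gamma_2$ and $x=(y_1,y_2)$.

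First I would exclude alternative~(a): if $\overline{A_1x}=Hx$ for a closed connected subgroup $H\subset G$, then $Hx$ is a closed set containing $\overline{A_1x}$, so Lemma~\ref{smaller} forces $G\subset H$, whence $H=G$ and $\overline{A_1x}=G/\Gamma$, contradicting Lemma~\ref{notdense}. To begin excluding alternative~(b), I would suppose $F$ and an epimorphism $\phi:F\to L$ are as in Conjecture~\ref{margulis}. Since $A_1\subset F$ and $Fx$ is closed we have $\overline{A_1x}\subset Fx$, so Lemma~\ref{smaller} again gives $G\subset F$ and therefore $F=G$. Then $\ker\phi$ is a closed normal subgroup of $G=\Bsl(n_1,\R)\times\Bsl(n_2,\R)$, and since the only ideals of $\goth{sl}(n_1,\R)\oplus\goth{sl}(n_2,\R)$ are $0$, $\goth{sl}(n_1,\R)$, $\goth{sl}(n_2,\R)$ and the whole sum, its identity component $(\ker\phi)^{\circ}$ is one of $\{e\}$, $G_1\times\{e\}$, $\{e\}\times G_2$, $G$.

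It then remains to rule out each of the four cases by a dimension count. If $(\ker\phi)^{\circ}=\{e\}$, then $\phi$ has discrete kernel, so $\dim\phi(A_1)=\dim A_1=n-3\geq 3$ because $n\geq 6$, and $\phi(A_1)$ cannot be a one-parameter subgroup. If $(\ker\phi)^{\circ}=G_1\times\{e\}$, then, up to a finite central quotient, $\phi$ factors through the projection $G\to G_2$, so $\dim\phi(A_1)$ equals the dimension of the image of $A_1$ under $D_1\times D_2\to D_2$; a direct computation using $n_1\geq 3$ shows this image is all of $D_2$, which has dimension $n_2-1\geq 2$, so $\phi(A_1)$ is again not a one-parameter subgroup, and the case $(\ker\phi)^{\circ}=\{e\}\times G_2$ is symmetric. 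Finally, if $(\ker\phi)^{\circ}=G$, then $L$ is discrete and $\phi(A_1)$ is trivial, which is not a nontrivial one-parameter subgroup. In every case the last requirement of~(b) fails, so~(b) cannot hold, which finishes the proof.

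Once Lemmas~\ref{notdense} and~\ref{smaller} are available this deduction is essentially formal; the substance of the matter lies in those two lemmas, and above all in Lemma~\ref{smaller}, which is precisely what prevents $\overline{A_1x}$ from being trapped inside a proper subgroup orbit. The only point in the present argument requiring a little care is the degenerate case in which the target $L$ collapses to a discrete group, where one relies on the convention that the one-parameter subgroup occurring in~(b) is nontrivial (otherwise~(b) would be vacuously satisfiable and would not constitute a genuine obstruction to rigidity).
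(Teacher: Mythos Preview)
Your argument is correct and follows essentially the same route the paper takes (the paper omits the proof of Proposition~\ref{intermediateresult} as ``similar'' to that of Theorem~\ref{maintheorem}(1)): use Lemma~\ref{smaller} to force $F=G_1\times G_2$ in both (a) and (b), then rule out (b) by classifying the closed normal subgroups of $G_1\times G_2$ and checking that none of them meets $A_1$ in a codimension-one subgroup. Your explicit remark about the degenerate case $(\ker\phi)^{\circ}=G$ and the implicit nontriviality convention for the one-parameter group in (b) is a fair observation; the paper handles this implicitly by requiring $A_1\cap H$ to have codimension exactly one in $A_1$.
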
 
   
\subsection{Theorem \ref{maintheorem}, part (1)} \label{firstpart}
In order to obtain the first part of Theorem \ref{maintheorem}, choose $\Gamma_i=\Bsl(n_i,\Z)$, $\Gamma=\Bsl(n,\Z)$ and consider the embedding of $G_1\times G_2$ in $G$, where matrices are written in blocks:
 \begin{equation} \label{defpsi}
 \Psi \, : \, (M_{n_1,n_1},N_{n_2,n_2})\mapsto \left[ \begin{array}{cc}M_{n_1,n_1} & 0_{n_1,n_2} \\ 
 0_{n_2,n_1} & N_{n_2,n_2}  \end{array}\right].
\end{equation}

 This embedding gives rise to an embedding $\overline{\Psi}$ of $G_1\times G_2/\Gamma_1\times \Gamma_2$ into $G/\Gamma$. Let $y_1,y_2$ be two points as above, let $x=\overline{\Psi}(y_1,y_2)$ and take $A=\Psi(A_1)$. We claim that this point $x$ and this group $A$ satisfy Theorem \ref{maintheorem}, part (1). In fact, since the image of $\overline{\Psi}$ is a closed connected $A$-invariant subset of $\Bsl(n,\R)/\Bsl(n,\Z)$, everything takes place in this direct product.
  
\subsection{Theorem \ref{maintheorem}, part (2)}  \label{secondpart}
The second part of Theorem \ref{maintheorem} is obtained as follows. Let $\sigma$ be the nontrivial field automorphism of the quadratic extension $\Q(\sqrt[4]{2})/\Q(\sqrt{2})$. Consider for any $m\geq 1$:
$${\bf SU}(m, \Z[\sqrt[4]{2}],\sigma)= \left\{ M \in \Bsl(m,\Z[\sqrt[4]{2}]) \, : \, (^tM^\sigma) M = I_m \right\}.$$ 
Then ${\bf SU}(m, \Z[\sqrt[4]{2}],\sigma)$ is a lattice in $\Bsl(m, \R)$, as will be proved in Section \ref{thelattice} (see \cite[Appendix]{df} for $m=3$). Define for $i=1,2$, 
$\Gamma_i={\bf SU}(n_i, \Z[\sqrt[4]{2}],\sigma)$, and $\Gamma={\bf SU}(n, \Z[\sqrt[4]{2}],\sigma)$. 
Now consider the map:
$$\varphi : G_1 \times G_2 \times \R \rightarrow G,$$
$$(X,Y,t)\mapsto \left[ \begin{array}{cc} e^{n_2 t} X & 0 \\ 
 0 & e^{-n_1 t}Y  \end{array}\right].$$
Define $M$ to be the image of $\varphi$. This time, $\varphi$ factors into a finite covering $\overline{\varphi}$ of homogeneous spaces:
$$\overline{\varphi}: G_1\times G_2 \times \R /\Gamma_1\times \Gamma_2 \times (\log \alpha) \Z  \rightarrow M/M\cap \Gamma \subset G/\Gamma,$$
where $\alpha=(3+2\sqrt{2})+\sqrt[4]{2}(2+2\sqrt{2})$ satisfies $\alpha^{-1}=\sigma(\alpha)$. 
 Consider the points $y_i$ constructed above, and let $x=\overline{\varphi}(y_1,y_2,0)$. Choose:
 $$A=\left\{ diag(a_1,..,a_n) \; | \; \prod_{i=1}^{n} a_i=\frac{a_1a_{n_1+1}}{a_{n_1}a_{n}}=1, \, a_i>0\right\}\subset \Bsl(n,\R).$$
 We claim that this lattice $\Gamma$,  this point $x$ and this group $A$ satisfy Theorem  \ref{maintheorem}, part (2). What happens here is that the $A$-orbit of $x$ is a circle bundle over an $A_1$-orbit (up to the finite cover $\overline{\varphi}$), like in Rees' example. 
 
\section{Proof of Theorem \ref{maintheorem}}
\label{sectionproof}
\subsection{{$\Q$}-irreducible tori}
\label{prasadrapinchuk}

 Fix $i\in \{1,2\}$. Recall that $\Gamma_i$ is a lattice in $G_i=\Bsl(n_i,\R)$. Since $n_i\geq 3$, by Margulis's arithmeticity Theorem \cite[Theorem 6.1.2]{Z}, there exists a semisimple algebraic $\Q$-group ${\bf H}_i$ and a surjective homomorphism $\theta$ from the connected component of identity of the real points of this group ${\bf H}_i^0(\R)$ to $\Bsl(n_i,\R)$, with compact kernel, such that $\theta({\bf H}_i(\Z)\cap {\bf H}_i^0(\R))$ is commensurable with $\Gamma_i$.
 
 Following Prasad and Rapinchuk, we say that a $\Q$-torus ${\bf T}\subset {\bf H}_i$ is $\Q$-irreducible if it does not contain any proper subtorus defined over $\Q$. By \cite[Theorem 1,(ii)]{PRR}, there exists a maximal $\Q$-anisotropic $\Q$-torus ${\bf T}_i\subset {\bf H}_i$, which is $\Q$-irreducible. Because any two maximal $\R$-tori of $\Bsl(n_i,\R)$ are $\R$-conjugate, there exists $g_i\in G_i$ such that $\theta({\bf T}_i^0(\R))=g_i^{-1}D_i g_i$. The subgroup ${\bf T}_i(\Z)$ is a cocompact lattice in ${\bf T}_i(\R)$ since ${\bf T}_i$ is $\Q$-anisotropic \cite[Theorem 8.4 and Definition 10.5]{B1}. Because $\theta({\bf H}_i(\Z)\cap {\bf H}_i^0(\R))$ and 
 $\Gamma_i$ are commensurable and $\theta$ has compact kernel, it follows that 
both $\Gamma_i \cap g_i^{-1}D_i g_i$ and $\theta({\bf T}^0_i(\Z)) \cap \Gamma_i \cap g_i^{-1}D_i g_i$ are also cocompact lattices in $g_i^{-1}D_i g_i$. The resulting topological torus $\pi_i (D_i g_i)  \subset G_i/\Gamma_i$ will be denoted ${\mathcal T}_i$. Write $z_i=\pi_i(g_i)$, so that ${\mathcal T}_i=D_i z_i$.

For every $1\leq k  < l \leq n_i$, define as in \cite{lw}: 
 $$N_{k,l}^{(i)}=\left\{ diag(a_1,.., a_{n_i}) \, : \, \prod_{s=1}^{n_i} a_s=1, \, a_k=a_l, \, a_s>0 \right\}\subset D_i,$$ 
 Of interest to us amongst the consequences of $\Q$-irreducibility is the fact that an element of $\Gamma_i \cap g_i^{-1}D_i g_i$ lying in a wall of a Weyl chamber is necessarily trivial. This is expressed in the following form:
  
 \begin{lem} \label{noncompact}
  For every $1\leq k  < l \leq n_i$, and any closed connected subgroup $L$ of positive dimension of $N_{k,l}^{(i)}$, the $L$-orbit of $z_i$ is not compact. 
 \end{lem}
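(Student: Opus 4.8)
The plan is to argue by contradiction: if some positive-dimensional closed connected $L\subset N_{k,l}^{(i)}$ had a compact orbit $Lz_i$, I will extract a non-torsion $\Q$-rational point of $\mathbf{T}_i$ all of whose powers lie in a proper $\R$-subtorus, contradicting $\Q$-irreducibility.

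First, the reduction to lattice points. Since $L$ is a closed connected subgroup of the group of diagonal matrices with positive entries, it is (through $\log$) a linear subspace of $\R^{n_i-1}$, so $L\cong\R^{\dim L}$. The stabilizer of $z_i=\pi_i(g_i)$ in $L$ is $L\cap g_i\Gamma_i g_i^{-1}$, and compactness of $Lz_i$ forces this stabilizer to be a cocompact lattice in $L$, hence of rank $\dim L\geq 1$ and in particular nontrivial. Choose $\gamma_0\neq\mathrm{id}$ in it and put $\gamma=g_i^{-1}\gamma_0 g_i$. Then $\gamma\in\Gamma_i$, and since $\gamma_0\in L\subset N_{k,l}^{(i)}\subset D_i$ we get $\gamma\in g_i^{-1}N_{k,l}^{(i)}g_i\subset g_i^{-1}D_i g_i=\theta(\mathbf{T}_i^0(\R))$; thus $\gamma$ lies in the cocompact lattice $\Gamma_i\cap g_i^{-1}D_i g_i$. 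As $\theta(\mathbf{T}_i^0(\Z))\cap\Gamma_i\cap g_i^{-1}D_i g_i$ has finite index in it (two cocompact lattices in an abelian Lie group, one inside the other), some power $\gamma^m$ equals $\theta(s)$ with $s\in\mathbf{T}_i^0(\Z)\subset\mathbf{T}_i(\Q)$. Because $\gamma_0$ is a nontrivial diagonal matrix with positive real eigenvalues, $\gamma$ and $\gamma^m$ are conjugate to such matrices, hence of infinite order; therefore $\theta(s)\neq\mathrm{id}$ and $s$ is not a torsion point.

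Next comes the use of $\Q$-irreducibility. The Zariski closure of $\langle s\rangle$ over $\Q$ is defined over $\Q$ (its points are Galois-fixed), and its identity component is a $\Q$-subtorus of $\mathbf{T}_i$; by $\Q$-irreducibility it is either trivial — impossible since $s$ is not torsion — or all of $\mathbf{T}_i$. Hence $\langle s\rangle$ is Zariski dense in $\mathbf{T}_i$ over $\Q$, and a short argument using $\Q$-linear independence of real numbers (expand a real regular function vanishing on $\langle s\rangle$ in a $\Q$-basis and evaluate at the $\Q$-points $s^n$) shows $\langle s\rangle$ is then also Zariski dense in $\mathbf{T}_i\times_\Q\R$. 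On the other hand, $\chi:=(e_k-e_l)\circ\mathrm{Int}(g_i)\circ(\theta|_{\mathbf{T}_i})$ is a nontrivial $\R$-algebraic character of $\mathbf{T}_i\times_\Q\R$: it is the composite of the nontrivial character $e_k-e_l$ of the diagonal torus, the isomorphism $\mathrm{Int}(g_i)\colon g_i^{-1}D_ig_i\to D_i$, and the surjection induced by $\theta$. Moreover $\chi(s)=1$, since $g_i\theta(s)g_i^{-1}=\gamma_0^m\in N_{k,l}^{(i)}$ has equal $k$-th and $l$-th diagonal entries. Thus $\langle s\rangle\subset\ker\chi$, a proper $\R$-closed subgroup of $\mathbf{T}_i\times_\Q\R$, contradicting the Zariski density. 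This contradiction proves the lemma.

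The step requiring most care is the assertion that $\theta$ restricted to $\mathbf{T}_i$ is an $\R$-algebraic homomorphism, so that $\ker\chi$ is genuinely Zariski closed and the final contradiction is legitimate; this is part of the structure provided by Margulis arithmeticity (the $\R$-isomorphism of $\mathbf{H}_i\times_\Q\R$ with the product of a group isogenous to $\Bsl(n_i)$ and a compact factor), but it must be made explicit. The remaining ingredients — that a compact orbit yields a cocompact stabilizer, the commensurability of the two lattices inside $g_i^{-1}D_ig_i$ already recorded above, and the passage of Zariski density from $\Q$ to $\R$ — are routine.
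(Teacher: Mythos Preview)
Your proof is correct and follows essentially the same line as the paper's: both extract from the compact orbit a non-torsion $\Q$-rational point of $\mathbf{T}_i$ whose image under $\theta$ lands in $g_i^{-1}N_{k,l}^{(i)}g_i$, and then derive a contradiction from $\Q$-irreducibility via the character $a_k/a_l$. The only difference is that where the paper invokes \cite[Proposition~1,~(iii)]{PRR} as a black box for the statement that a nontrivial character cannot kill a non-torsion $\Q$-point of a $\Q$-irreducible torus, you unpack that proposition by hand through the Zariski-density argument; the algebraicity concern you flag about $\theta|_{\mathbf{T}_i}$ applies equally to the paper's representation $\rho$ and is resolved in both cases by the algebraic nature of $\theta$ coming from arithmeticity.
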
 
\begin{proof} Assume the contrary, that is $L z_i$ is compact. This implies that $g_i^{-1} L g_i \cap \Gamma_i$ is a uniform lattice in 
$g_i^{-1} L g_i$, so  $g_i^{-1} L g_i \cap \theta({\bf H}_i(\Z))$ is also a uniform lattice.
Since $L$ is nontrivial, there exists an element $\gamma \in {\bf H}_i(\Z)\cap {\bf H}_i^0(\R)$ of infinite order, such that $g_i \theta(\gamma) g_i^{-1}$ is in $L$. Note that since $\theta$ has compact kernel, ${\bf T}_i(\Z)$ is a lattice in $\theta^{-1} (\theta ({\bf T}_i^0(\R)))$ and is then a subgroup of finite index in ${\bf H}_i(\Z)\cap {\bf H}_i^0(\R)\cap \theta^{-1} (\theta ({\bf T}_i^0(\R)))$, so there exists $n>0$ such that $\gamma^n$ belongs to ${\bf T}_i(\Z)$.  
 Consider the representation:
\[
\begin{split}
\rho : \, & {\bf H}_i^0 (\R) \rightarrow {\bf GL}(\goth{sl}(n_i, \R)), \\
  & x \mapsto {\bf Ad}(g_i \theta(x) g_i^{-1}).
 \end{split}
 \]
 
 Recall that $\chi(diag(a_1,..,a_{n_1}))=a_k/a_l$ is a weight of ${\bf Ad}$ with respect to $D_i$, so 
 $\chi$ is a weight of $\rho$ with respect to ${\bf T}_i$. By \cite[Proposition 1, (iii)]{PRR}, the $\Q$-irreducibility of ${\bf T}_i$ implies that $\chi(\gamma^n)\neq 1$, but this contradicts the fact that $\theta(\gamma^n) \in g_i^{-1} N_{k,l}^{(i)} g_i$. 
\end{proof}

\subsection{Contraction and expansion} 
 
 For real $s$, denote by $a_i(s)$ the following $n_i\times n_i$-matrix:
 $$a_i(s)=diag(e^{s/2}, \underbrace{1, \dots, 1}_{n_i-2 \; \mathrm{times}},e^{-s/2}),$$
 and write simply $N_i$ for $N_{1,n_i}^{(i)}$. Write also:
   $$h_i(t)= 
      \left[ 
  \begin{array}{ccccc}
  1 & 0 & \dots 	& 0 & t \\
  0 & 1 & \ddots 	&   & 0\\
  \vdots & \ddots & \ddots & \ddots & \vdots \\
  \vdots &  & \ddots 		& 1 & 0 \\
  0 & \dots & \dots 		& 0 & 1 \\
  \end{array}
  \right].$$
 Then the following commutation relation holds:
 $$a_i(s) h_i(t)=h_i(e^{s}t)a_i(s), $$
 that is the direction $h_i$ is expanded for positive $s$; note that both $h_i$ and $a_i$ commute with elements of $N_i$. It is easy to check from Equation (\ref{definitionofA1}) that
 $$A_1=\left\{ (a_1(s)d_1,a_2(-s)d_2) \; : \; s \in \R, \; d_i\in N_i, \; i=1,2  \right\}.$$
 
 Recall that $y_i=h_i(1)z_i$. 
 
 \begin{lem} \label{where}
 
 \begin{enumerate}
 \item
 If $s\leq 0$, for any $d \in N_i$ the point $a_i(s)d y_i$ lies in the compact set $K_i=h_i([0,1]){\mathcal T}_i$. 
 \item The $D_i$-orbit of $y_i$ is dense in $G_i/\Gamma_i$.
 \item The set $\{a_i(s) d y_i \; : \; s\geq 0, \, d \in N_i \} $ is dense in $G_i/\Gamma_i$.
 \end{enumerate}
 \end{lem}
 \begin{proof} The first statement is clear from the commutation relation. It also implies that $D_i y_i$ contains the compact torus ${\mathcal T}_i$ in its closure.
 
  To prove the second point, we rely heavily on the paper of Lindenstrauss and Weiss. \cite[Theorem 1.1] {lw} applies here, since
 the hypotheses of their Theorem is precisely the conclusion of Lemma \ref{noncompact} for $L=N_{k,l}^{(i)}$. So the following holds: there exists a reductive subgroup $H_i$, containing $D_i$, such that $\overline{D_i y_i}=H_i y_i$, and $H_i\cap \Gamma_i$ is a lattice in $H_i$. Write $L=D_i \cap C_{G_i}(H_i)$.
 
  Since $D_i y_i$ is not closed, $H_i \neq D_i$, so there exists a nontrivial root relatively to $D_i$ for the Adjoint representation of $H_i$ on its Lie algebra, which is a subalgebra of $\goth{sl}(n_i,\R)$. Thus there exist $k,l$ such that $L\subset N_{k,l}^{(i)}$. By \cite[step 4.1 of Lemma 4.2]{lw}, $L z_i$ is compact, so by Lemma \ref{noncompact}, $L$ is trivial. By \cite[Proposition 3.1]{lw}, $H_i$ is the connected component of the identity of $C_{G_i}(L)$, so $H_i=G_i$, as desired.
 
  The third claim follows from the first and second claim together with the fact that $K_i$ has empty interior.
 \end{proof}
  
  \subsection{Topological properties of the $A_1$-orbit.}  
  
  \begin{lem} \label{notdense}
  The $A_1$-orbit of $(y_1,y_2)$  is not dense in $G_1\times G_2/\Gamma_1\times \Gamma_2$.
  \end{lem}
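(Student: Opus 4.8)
The plan is to exploit the expansion/contraction structure of $A_1$ together with the fact (Lemma \ref{where}(1)) that each factor $a_i(s)$ with $s$ of the \emph{wrong} sign forces the orbit into a compact set with empty interior. Recall from the description of $A_1$ that every element of $A_1$ has the form $(a_1(s)d_1, a_2(-s)d_2)$ with $s\in\R$ and $d_i\in N_i$. The key observation is that the sign of $s$ is the \emph{same} parameter in both factors, but it acts in opposite directions: $h_1$ is expanded in $G_1$ when $s\geq 0$, while $h_2$ is expanded in $G_2$ when $-s\geq 0$, i.e. when $s\leq 0$. So one of the two factors is always being contracted.

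First I would fix the compact sets $K_i = h_i([0,1])\mathcal T_i$ from Lemma \ref{where}(1). The claim is that for every $(s,d_1,d_2)$ with $s\leq 0$ we have $(a_1(s)d_1 y_1, a_2(-s)d_2 y_2) \in K_1 \times (G_2/\Gamma_2)$, since the first coordinate lands in $K_1$ by Lemma \ref{where}(1); symmetrically, for $s\geq 0$ the point lies in $(G_1/\Gamma_1)\times K_2$. Hence the whole $A_1$-orbit of $(y_1,y_2)$ is contained in the set
\begin{equation*}
\big(K_1 \times (G_2/\Gamma_2)\big)\ \cup\ \big((G_1/\Gamma_1)\times K_2\big).
\end{equation*}
This union is closed (each piece is closed, being a product of a compact set and a closed set), so $\overline{A_1(y_1,y_2)}$ is contained in it as well. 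But this set is not all of $G_1\times G_2/\Gamma_1\times\Gamma_2$: by Lemma \ref{where}(3) (or already by the last line of the proof of Lemma \ref{where}, noting $K_i$ has empty interior), $K_i$ has empty interior in $G_i/\Gamma_i$, so $K_1\times(G_2/\Gamma_2)$ and $(G_1/\Gamma_1)\times K_2$ each have empty interior, and by the Baire category theorem their union does too. Therefore $\overline{A_1(y_1,y_2)}$ has empty interior and in particular the $A_1$-orbit is not dense.

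I do not expect a serious obstacle here; the one point to state carefully is why the two ``half-orbits'' are closed, which just uses compactness of $K_i$ and the fact that a product of a compact set with a closed set is closed. One could even avoid Baire category by observing directly that any point $((g_1\Gamma_1),(g_2\Gamma_2))$ with $g_1\Gamma_1\notin K_1$ and $g_2\Gamma_2\notin K_2$ has a neighborhood disjoint from both pieces, hence from the orbit closure; such points exist since each $K_i\subsetneq G_i/\Gamma_i$. Either way the argument is short, and the real content has already been packaged into Lemma \ref{where}.
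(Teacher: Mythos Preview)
Your argument is correct and is essentially the paper's own proof, just phrased in complementary terms: the paper exhibits directly the nonempty open set $U=K_1^c\times K_2^c$ and observes that the $A_1$-orbit of $(y_1,y_2)$ never meets $U$, which is exactly the complement of your union $\big(K_1 \times (G_2/\Gamma_2)\big)\cup\big((G_1/\Gamma_1)\times K_2\big)$. Your ``alternative'' at the end (pick a point with each coordinate outside $K_i$) is precisely the paper's one-line version, so the Baire detour is unnecessary.
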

\begin{proof} 
 Consider the open set $U=K_1^c\times K_2^c$. We claim that the $A_1$-orbit of $(y_1,y_2)$ does not intersect $U$. Indeed, if $(a_1(s)d_1,a_2(-s)d_2)\in A_1$ with $s\in \R$ and $d_i \in N_i$, the previous Lemma implies that if $s \geq 0$, $a_2(-s)d_2y_2 \in K_2$, and if $s \leq 0$, $a_1(s)d_1y_1 \in K_1$. \end{proof}

 The following elementary result will be useful:
 \begin{lem} \label{elementary}
  Let $p_i: G_1\times G_2 \rightarrow G_i$ be the first (resp.\ second) coordinate morphism. If $F\subset G_1\times G_2$ is a subgroup such that $p_i(F)=G_i$ for $i=1,2$, and $A_1\subset F$, then $F=G_1\times G_2$.
  \end{lem}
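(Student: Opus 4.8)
The plan is to run a Goursat-type argument, using the near-simplicity of $G_i=\Bsl(n_i,\R)$ together with the observation that $A_1$ already contains a positive-dimensional subgroup sitting entirely inside each of the two factors.

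First I would read off from the explicit description $A_1=\{(a_1(s)d_1,a_2(-s)d_2):s\in\R,\ d_i\in N_i\}$ that, specialising $s=0$ (so that $a_i(0)=I_{n_i}$), one has $\{I_{n_1}\}\times N_2\subset A_1$ and $N_1\times\{I_{n_2}\}\subset A_1$, where I abbreviate $N_i=N_{1,n_i}^{(i)}$. Since $n_i\geq 3$, each $N_i$ is a connected subgroup of positive dimension $n_i-2$, hence infinite. As $A_1\subset F$, the normal subgroup $\ker(p_1|_F)=F\cap(\{I_{n_1}\}\times G_2)$ of $F$ contains $\{I_{n_1}\}\times N_2$, so its image $p_2\big(\ker(p_1|_F)\big)$ is a normal subgroup of $p_2(F)=G_2$ containing the infinite group $N_2$.

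Now I would invoke the structure of normal subgroups of $\Bsl(n_2,\R)$: since this group is perfect and $\Bsl(n_2,\R)$ modulo its finite center is simple, every normal subgroup is either contained in the center or equal to the whole group. As $p_2\big(\ker(p_1|_F)\big)$ contains the infinite group $N_2$ it cannot be central, so $p_2\big(\ker(p_1|_F)\big)=G_2$; that is, $\{I_{n_1}\}\times G_2\subset F$. Exchanging the roles of the two factors gives likewise $G_1\times\{I_{n_2}\}\subset F$, and multiplying these two subgroups yields $F\supset G_1\times G_2$, hence equality.

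The argument is short and I do not anticipate a genuine obstacle; the one step carrying all the weight is the very first one, namely producing inside $A_1$ a positive-dimensional subgroup contained in a single factor — this is exactly what forces $\ker(p_1|_F)$ to be non-central and lets the simplicity of $\Bsl(n_i,\R)$ finish the job. (If one is willing to assume, as in the intended application, that $F$ is connected, the same reasoning can be phrased at the level of Lie algebras: $\ker(dp_1|_{\goth f})$ is an ideal of $\goth f$, hence $dp_2$ of it is an ideal of $\goth{sl}(n_2,\R)$ containing $\mathrm{Lie}(N_2)\neq 0$, so it is all of $\goth{sl}(n_2,\R)$.)
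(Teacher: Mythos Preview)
Your argument is correct and is essentially the same Goursat-type proof as in the paper: both hinge on the observation that $N_2\subset p_2\big(A_1\cap\ker p_1\big)$ is infinite, so by almost-simplicity of $G_2$ the normal subgroup $p_2(\ker(p_1|_F))$ equals $G_2$, i.e.\ $\{I_{n_1}\}\times G_2\subset F$. The only cosmetic difference is that the paper, rather than repeating the argument for the other factor, finishes by combining $\{I_{n_1}\}\times G_2\subset F$ directly with the hypothesis $p_1(F)=G_1$.
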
 
 \begin{proof}
  Let $F_1=Ker(p_1)\cap F$. Since $F_1$ is normal in $F$, $p_2(F_1)$ is normal in $p_2(F)=G_2$. 
  Note that $N_2 \subset p_2(A_1\cap Ker(p_1)) \subset p_2(F_1)$ is not finite, and that $G_2$ is almost simple, consequently the normal subgroup $p_2(F_1)$ of $G_2$ is equal to $G_2$. Let $(a,b) \in G_1\times G_2$, by assumption there exists $f \in F$ such that  $p_1(f)=a$. Let $f_1 \in F_1$ be such that $p_2(f_1)= b p_2(f)^{-1}$, then $(a,b)=f_1 f \in F$.
 \end{proof}

 We will have to apply several times the two following well-known Lemmas:

\begin{lem} \label{intersect}
Let $L$ be a Lie group, $\Lambda \subset L$ a lattice, $M,N$ two closed, connected subgroups of $L$, such that for some $w \in  L/\Lambda$, $Mw$ and $Nw$ are closed. Then $(M\cap N)w$ is closed.
\end{lem}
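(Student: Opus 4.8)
The plan is to pass to the orbit $Mw$, which is itself (a quotient of) a homogeneous space, and run the same closedness criterion one level down. First I would recall the standard fact that if $Mw$ is closed in $L/\Lambda$, then the stabilizer $M_w = \{m \in M : mw = w\}$ is a lattice in $M$: indeed $Mw \cong M/M_w$, and a closed orbit of a Lie group acting on a space carrying a finite invariant measure (here the closed orbit $Mw$ inherits a finite $M$-invariant measure from the fact that $M$ is unimodular on the relevant piece, or one invokes the Borel density / closed-orbit criterion as in Raghunathan's book) forces $M/M_w$ to have finite volume, i.e. $M_w$ is a lattice in $M$. Fix a representative $g \in L$ with $w = g\Lambda$; then $M_w = M \cap g\Lambda g^{-1}$, and similarly $N_w = N \cap g\Lambda g^{-1}$ is a lattice in $N$.

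Next I would observe that $(M \cap N)_w = (M \cap N) \cap g\Lambda g^{-1} = M_w \cap N_w$. The key step is to show this is a lattice in $M \cap N$; equivalently, that $(M\cap N)w$ is closed. I would deduce this by working inside the closed orbit $Mw$, which we may identify with $M/M_w$, a homogeneous space of finite volume for $M$ with $M_w$ a lattice. Inside $M/M_w$ we have the point $w$ (the image of the identity coset), and we consider the orbit of $w$ under the closed connected subgroup $M \cap N$ of $M$. So it suffices to prove: if $\Lambda'$ is a lattice in a Lie group $M$, and $P, Q$ are closed connected subgroups of $M$ with $P\Lambda'/\Lambda'$ and $Q\Lambda'/\Lambda'$ both closed, then $(P\cap Q)\Lambda'/\Lambda'$ is closed — which after relabeling is exactly the original statement but now with $N$ replaced by the subgroup $Q = M\cap N$ of $M$ that is automatically contained in $M$. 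In other words, once one knows the ambient group may be taken to be $M$ itself, one reduces to the case $N \subset M$.

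So the real content is the special case $N \subset M$: given $\Lambda$ a lattice in $L$, $N \subset L$ closed connected with $N w$ closed, show $Nw$ is closed in $Mw$ automatically — but this is trivial once $N \subset M$ and $Mw$ is closed, since a closed subset $Nw$ of $L/\Lambda$ that happens to lie in the closed subset $Mw$ is closed in $Mw$. Hence the whole argument collapses to the two reductions above: (i) $Mw$ closed $\Rightarrow$ $Mw \cong M/M_w$ with $M_w$ a lattice; (ii) $Nw$ closed in $L/\Lambda$ and $(M\cap N)w \subset Mw$, so $(M\cap N)w = Mw \cap Nw$ is the intersection of two closed sets, hence closed. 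Wait — that last line is in fact the entire proof: $(M\cap N)w \subseteq Mw \cap Nw$ always, and conversely if $\ell w \in Mw \cap Nw$ with $\ell \in L$ we need not have $\ell \in M\cap N$, so the naive set-theoretic identity fails and the reductions are genuinely needed.

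The main obstacle, then, is precisely this failure of $(M\cap N)w = Mw \cap Nw$ at the level of sets, and the way around it is the first reduction: replace $L/\Lambda$ by $M/M_w$ (legitimate because $M_w$ is a lattice once $Mw$ is closed), inside which $N$-orbit must be taken to mean $(M\cap N)$-orbit, so that the group one intersects with is a priori a subgroup of $M$; then $Nw = (M\cap N)w$ is genuinely a subset of $M/M_w$, it is closed there (being closed in the finer topology of $L/\Lambda$ and contained in the closed subset $Mw$), and there is nothing left to prove. I would write this up by first proving the closed-orbit/lattice fact, then performing the identification $Mw \cong M/M_w$, then noting that closedness of $(M\cap N)w$ in $M/M_w$ is equivalent to closedness in $L/\Lambda$ (since $Mw$ is closed in $L/\Lambda$), and finally observing $(M\cap N)w = Nw \cap Mw$ now does hold, because every point of $Nw$ is already in $Mw$.
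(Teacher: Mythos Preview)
The paper does not actually argue this lemma; it simply invokes \cite[Lemma~2.2]{S}. So there is no proof in the paper to compare against, but your proposal contains a genuine gap.

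The circularity is in your final reduction. After identifying the closed orbit $Mw$ with $M/M_w$, you must show that the $(M\cap N)$-orbit of the base point $\bar w$ is closed in $M/M_w$. The only hypothesis available about $N$ is that $Nw$ is closed in $L/\Lambda$. You then write ``$Nw=(M\cap N)w$ is genuinely a subset of $M/M_w$, it is closed there (being closed in the finer topology of $L/\Lambda$\dots)''. This sentence conflates two distinct sets: $Nw$ is closed in $L/\Lambda$ but is not contained in $Mw$ in general, while $(M\cap N)w$ is contained in $Mw$ but its closedness is exactly the statement to be proved. Neither the equality $Nw=(M\cap N)w$ nor the equality $(M\cap N)w=Mw\cap Nw$ (which you yourself correctly flagged as false) becomes true after passing to $M/M_w$. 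Concretely, pulling back along $M\to M/M_w$, the closed set $Mw\cap Nw$ corresponds to $M\cap \bigl(N\,g\Lambda g^{-1}\bigr)$, whereas $(M\cap N)\bar w$ corresponds to $(M\cap N)\,M_w$; the second is in general a proper subset of the first, so closedness of the first in $M$ does not yield closedness of the second.

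A side remark: your assertion that $M_w$ is a lattice in $M$ uses more than is assumed (it would require, for instance, that $M$ be unimodular). What your reduction actually needs, and what does hold for Lie groups whenever $Mw$ is closed, is only that the continuous bijection $M/M_w\to Mw$ is a homeomorphism. Fixing this, however, does not repair the main circularity above.
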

\begin{proof} This is a weaker form of \cite[Lemma 2.2]{S}.
\end{proof}

\begin{lem} \label{subgroup}
Let $L$ be a connected Lie group, $\Lambda \subset L$ a discrete subgroup, $M,N$ two subgroups of $L$, such that $M$ is closed and connected, and $N$ is a countable union of closed sets. For any $w \in L/\Lambda$, if $Mw \subset Nw$, then $M\subset N$.
\end{lem}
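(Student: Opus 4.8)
The statement is: if $L$ is a connected Lie group, $\Lambda \subset L$ a discrete subgroup, $M$ a closed connected subgroup, $N$ a subgroup that is a countable union of closed sets, $w \in L/\Lambda$, and $Mw \subset Nw$, then $M \subset N$. Here is how I would argue.

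First, lift everything to $L$. Pick a representative $g \in L$ with $w = g\Lambda$. The hypothesis $Mw \subset Nw$ means: for every $m \in M$ there exist $n \in N$ and $\lambda \in \Lambda$ with $mg = ng\lambda$, i.e. $m \in N g\Lambda g^{-1}$. Thus $M \subset N \cdot (g\Lambda g^{-1})$. Since $\Lambda$ is countable and $N$ is a countable union of closed sets, the set $N \cdot (g\Lambda g^{-1}) = \bigcup_{\lambda \in \Lambda} N g\lambda g^{-1}$ is again a countable union of closed subsets of $L$ (each $N g\lambda g^{-1}$ is a translate of a countable union of closed sets by a fixed element, hence a countable union of closed sets; the full union over $\lambda$ is then still countable). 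So $M$ is covered by countably many closed sets $C_\lambda := M \cap N g\lambda g^{-1}$.

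Now invoke the Baire category theorem: $M$, being a closed subgroup of a Lie group, is itself a locally compact, hence Baire, space. Since $M = \bigcup_{\lambda} C_\lambda$ with each $C_\lambda$ closed in $M$, some $C_{\lambda_0}$ has nonempty interior in $M$; say it contains $Uv$ for some nonempty open $U \subset M$ and $v \in M$. Then $U \subset C_{\lambda_0} v^{-1} = M \cap N g\lambda_0 g^{-1} v^{-1}$, so $U$ lies in a single coset of $N$ (as a subset of $L$, the coset $N g \lambda_0 g^{-1} v^{-1}$). Translating, the open set $v^{-1} U^{-1} U \subset M$ containing the identity lies in $N \cap M$, because $N$ is a group: if $u_1, u_2 \in U$ then $u_1, u_2 \in N c$ for the common coset $c = g\lambda_0 g^{-1} v^{-1}$, so $u_2^{-1} u_1 \in c^{-1} N c$... — more cleanly: pick $u_0 \in U$; then $u_0^{-1} U \subset u_0^{-1} N c = c' N c$, hmm. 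The cleanest formulation: $U u_0^{-1} \subset N c (c^{-1} N^{-1}) \cap M$, and since $N$ is a group $N c c^{-1} N^{-1} = N N^{-1} = N$; wait that needs $Ncc^{-1}=N$. Let me just say: $U \subset N c$ with $c$ fixed, hence $U U^{-1} \subset N c c^{-1} N^{-1} = N N^{-1} = N$. Thus the identity neighborhood $W := U U^{-1}$ in $M$ is contained in $N$. Since $M$ is connected, it is generated by any neighborhood of the identity, so $M = \bigcup_{k \geq 1} W^k \subset N$, which is the desired conclusion.

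The only genuinely delicate point is the bookkeeping showing $N g\Lambda g^{-1}$ is still a countable union of closed sets and that Baire category applies inside $M$; both are routine once one observes that a closed subgroup of a Lie group is locally compact and that right-translation by a fixed element is a homeomorphism of $L$. No topological-dynamics input beyond this is needed — the statement is purely a Baire-category argument about coset spaces — so there is no real obstacle, just care with the group arithmetic when passing from "$U$ lies in one coset of $N$" to "an identity neighborhood of $M$ lies in $N$."
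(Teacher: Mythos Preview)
Your argument is correct and follows essentially the same Baire-category route as the paper: reduce to $M \subset N\Lambda'$ (with $\Lambda' = g\Lambda g^{-1}$), use Baire to find an open $U \subset M$ inside a single right coset $Nc$, deduce $UU^{-1} \subset N$, and conclude by connectedness of $M$. One small slip to clean up: you call each $C_\lambda = M \cap N g\lambda g^{-1}$ \emph{closed}, but $N$ is only assumed $F_\sigma$, so you should apply Baire to the finer countable decomposition into closed pieces of each $C_\lambda$ (which you correctly note exists) and then observe that the resulting interior still lies in some $C_\lambda$.
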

\begin{proof} Up to changing $\Lambda$ by one of its conjugate in $L$, one can assume that $w=\Lambda \in L/\Lambda$. By assumption, $M\Lambda \subset N\Lambda$ so $M \subset N\Lambda \subset L$. Recall that $M$ is closed, that $\Lambda$ is countable, and that $N$ is a countable union of closed sets, so Baire's category Theorem applies, and there exists $\lambda \in \Lambda$ and an open set $U$ of $M$ such that
$U \subset N\lambda$, so $UU^{-1} \subset N$. Since $M$ is a connected subgroup, $UU^{-1}$ generates $M$, so $M \subset N$. 
\end{proof}

  The following lemma will be useful both for proving that the closure of $A_1(y_1,y_2)$ is not homogeneous, and for proving it does not fiber over a $1$-parameter group orbit.
  
 \begin{lem} \label{smaller}
  Let $F$ be a closed connected subgroup of $G_1\times G_2$ such  that $F(y_1,y_2)$ contains the closure of $A_1(y_1,y_2)$. Then $F=G_1\times G_2$.
 \end{lem}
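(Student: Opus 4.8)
The plan is to reduce everything to Lemma~\ref{elementary}: since the inclusion $A_1\subset F$ will be established first, it suffices to prove $p_i(F)=G_i$ for $i=1,2$, where $p_i\colon G_1\times G_2\to G_i$ denotes the coordinate projection. To get $A_1\subset F$, note that $A_1(y_1,y_2)\subset\overline{A_1(y_1,y_2)}\subset F(y_1,y_2)$ and that $F$ is closed (hence trivially a countable union of closed sets), so Lemma~\ref{subgroup}, applied with $L=G_1\times G_2$, $\Lambda=\Gamma_1\times\Gamma_2$, $M=A_1$, $N=F$, $w=(y_1,y_2)$, yields $A_1\subset F$. A direct inspection of \eqref{definitionofA1} shows $p_i(A_1)=D_i$, so $D_i\subset p_i(F)$.

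The core step is to show that $p_i$ sends $\overline{A_1(y_1,y_2)}$ onto all of $G_i/\Gamma_i$. Fix $i=1$ and an arbitrary $w_1\in G_1/\Gamma_1$. By Lemma~\ref{where}(3) there exist $s_k\geq 0$ and $d_k\in N_1$ with $a_1(s_k)d_k y_1\to w_1$. The elements $(a_1(s_k)d_k,\,a_2(-s_k))$ lie in $A_1$, hence $(a_1(s_k)d_k y_1,\,a_2(-s_k)y_2)\in A_1(y_1,y_2)$; by Lemma~\ref{where}(1), applied in the second factor with the non-positive parameter $-s_k$, the second coordinates stay in the compact set $K_2$ and therefore subconverge to some $v_2\in K_2$, so $(w_1,v_2)\in\overline{A_1(y_1,y_2)}$ and $w_1\in p_1(\overline{A_1(y_1,y_2)})$. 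The symmetric argument, now using parameters $-s_k\le 0$ in the first factor (confined by $K_1$), gives $p_2(\overline{A_1(y_1,y_2)})=G_2/\Gamma_2$.

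Applying $p_i$ to $\overline{A_1(y_1,y_2)}\subset F(y_1,y_2)$ and noting that $p_i(F(y_1,y_2))=p_i(F)\,y_i$, the previous step yields $p_i(F)\,y_i=G_i/\Gamma_i$; in other words $p_i(F)$ acts transitively on $G_i/\Gamma_i$. Now $p_i(F)$ is a connected subgroup of $G_i$ containing the maximal $\R$-split torus $D_i$; its Lie algebra, being stable under ${\bf Ad}(D_i)$ and containing $\goth{d}_i$, is the sum of $\goth{d}_i$ with a set of root spaces, i.e.\ the Lie algebra of a closed subgroup of $G_i$, and since a connected Lie subgroup is determined by its Lie algebra, $p_i(F)$ is closed. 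Hence the orbit $p_i(F)\,y_i$ is homeomorphic to the quotient of $p_i(F)$ by the stabilizer of $y_i$, which is discrete (it is contained in a conjugate of $\Gamma_i$); so this manifold has dimension $\dim p_i(F)$, and comparison with $\dim(G_i/\Gamma_i)=\dim G_i$ forces $\dim p_i(F)=\dim G_i$, whence $p_i(F)=G_i$ because $G_i$ is connected. Finally, Lemma~\ref{elementary} gives $F=G_1\times G_2$.

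The point I expect to require the most care is the implication ``$p_i(F)$ is transitive on $G_i/\Gamma_i$'' $\Rightarrow$ ``$p_i(F)=G_i$'': transitivity alone is insufficient---a dense proper connected subgroup would also sweep out $G_i/\Gamma_i$---so one must genuinely use both that $p_i(F)$ is \emph{closed} (which is precisely where $D_i\subset p_i(F)$ enters) and that a discrete group cannot act transitively on a connected manifold of positive dimension. The surjectivity of $p_i$ on $\overline{A_1(y_1,y_2)}$ is the other delicate point; it rests on pairing the ``$s\geq 0$'' density statement of Lemma~\ref{where} in one factor with the ``$s\leq 0$'' relative-compactness statement in the other---the same commutation mechanism that powers Lemma~\ref{notdense}, here used to the opposite effect.
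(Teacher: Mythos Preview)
Your proof is correct and follows the same overall blueprint as the paper's: establish $A_1\subset F$ via Lemma~\ref{subgroup}, show that $p_i$ maps $\overline{A_1(y_1,y_2)}$ onto $G_i/\Gamma_i$ by pairing Lemma~\ref{where}(3) in one factor with Lemma~\ref{where}(1) in the other, deduce $p_i(F)=G_i$, and finish with Lemma~\ref{elementary}.

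The one point where you diverge is precisely the step you flag as delicate, the passage from $p_i(F)\,y_i=G_i/\Gamma_i$ to $p_i(F)=G_i$. You argue that $p_i(F)$ is closed because its Lie algebra contains $\goth{d}_i$ and is therefore a sum of $\goth{d}_i$ with root spaces, hence the Lie algebra of a Zariski-closed subgroup; then a dimension count finishes. This is valid, though the closedness assertion, while true, is not entirely trivial and would benefit from a reference. The paper sidesteps this structure theory altogether: since $G_1\times G_2$ is $\sigma$-compact and $F$ is closed, $p_i(F)$ is a countable union of compact sets, so Lemma~\ref{subgroup} applies a second time with $L=M=G_i$, $\Lambda=\Gamma_i$, $N=p_i(F)$, $w=y_i$, giving $G_i\subset p_i(F)$ directly. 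Your route makes explicit why no proper subgroup can do the job; the paper's route is more economical, reusing the Baire-category lemma already in hand and needing nothing about root spaces or algebraicity.
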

 \begin{proof}
  By Lemma \ref{where}, the set of first coordinates of  the set
  $$\{(a(s)d_1y_1,a(-s)d_2y_2): s \geq 0, d_i \in N_i \}, $$
 is dense in $G_1/\Gamma_1$ and the second coordinates lies in the compact set $K_2$, so the closure of $A_1(y_1,y_2)$ contains points of arbitrary first coordinate with their second coordinate in $K_2$. 
 Consequently, the set of first coordinates of $F(y_1,y_2)$ is the whole $G_1/\Gamma_1$, and similarly for the set of second coordinates. For $i=1,2$, Lemma \ref{subgroup} now applies to $L=M=G_i$, $\Lambda=\Gamma_i$, $N=p_i(F)$, which is a countable union of closed sets because $G_1\times G_2$ is $\sigma$-compact, and $w=y_i$, and so $p_i(F)=G_i$.
 
  In order to apply Lemma \ref{elementary} and finish the proof, we have to show that $A_1\subset F$. 
  Again, this follows from  a direct application of Lemma \ref{subgroup} to $L=G_1\times G_2$, $\Lambda=\Gamma_1 \times \Gamma_2$, $M=A_1$, $N=F$, $w=(y_1,y_2)$.
  
 \end{proof}
 
 \subsection{Proof of Theorem \ref{maintheorem}, part (1).} We now proceed to proving Theorem \ref{maintheorem}, part (1). The proof of Proposition \ref{intermediateresult} is similar and is omitted. 
 
 Recall that in this case, we fixed $A=\Psi(A_1)$ and $x=\overline{\Psi}(y_1,y_2)$.
 
 Assume $\overline{Ax}$ is homogeneous, that is $\overline{Ax}=Fx$ for a closed connected subgroup $F$ of $G$. Since $Ax \subset \overline{\Psi}(G_1\times G_2/\Gamma_1\times \Gamma_2)$, which is closed in $G/\Gamma$, Lemma \ref{subgroup} imply that $F \subset \Psi(G_1 \times G_2)$.
 By Lemma \ref{smaller}, $F=\Psi(G_1\times G_2)$, so $Fx=G/\Gamma$ and $Ax$ is dense in $\overline{\Psi}(G_1\times G_2)$, which is a contradiction.
 
 Now assume $\overline{Ax}$ fibers over the orbit of a one-parameter subgroup. Let $F$ be a closed connected subgroup, $L$ a Lie group and $\phi:F\rightarrow L$ a continuous epimorphism satisfying the (b) of the conjecture. Let $F'=F \cap \Psi(G_1\times G_2)$, we have $A \subset F'$. By Lemma \ref{intersect}, $F'x$ is closed in $Fx \cap \overline{\Psi}(G_1\times G_2)$, so is closed in $G/\Gamma$. By Lemma \ref{smaller} , $F'=\Psi(G_1\times G_2)$ necessarily. Let $H=Ker(\phi \circ \Psi)\subset G_1\times G_2$, so $A_1/(A_1\cap H)$ is a one-parameter group by assumption (b). 
 
 The subgroup $H$ is a normal subgroup of the semisimple group $G_1 \times G_2$, which has only four kind of normal subgroups : finite, $G_1\times G_2$, $G_1 \times \mathrm{finite}$ and $\mathrm{finite} \times G_2$. None of these possible normal subgroups have the property that they intersect $A_1$ in a codimension $1$ subgroup, so this is a contradiction. 

\subsection{The arithmetic lattice}
\label{thelattice}

 Here we prove that ${\bf SU}(n, \Z[\sqrt[4]{2}], \sigma)$ is a lattice in $\Bsl(n, \R)$. 
 Let $P,Q$ be the polynomials with coefficients in $\Q(\sqrt{2})$ such that for any $X,Y \in M_n(\C)$
$$\det(X+\sqrt[4]{2}Y)=P(X,Y)+\sqrt[4]{2}Q(X,Y).$$
 For an integral domain ${\bf A}\subset \C$, consider the set of pairs of matrices:
 \[
\begin{split}
 {\bf G}({\bf A})=\{ (X,Y)\in M_n({\bf A})^2 \, : \, & ^tX X - \sqrt{2} ^tYY=I_n, \, ^tXY- ^tYX=0, \\
 & \, P(X,Y)=1, \, Q(X,Y)= 0 \},  
\end{split}
\]
which implies that $(^tX-\sqrt[4]{2} ^tY)(X+\sqrt[4]{2}Y)=I_n$ and $\det(X+\sqrt[4]{2})=1$ for all $(X,Y) \in {\bf G}({\bf A})$.
Endow  ${\bf G}({\bf A})$ with the multiplication given by
$$(X,Y)(X',Y')=(XX' + \sqrt{2}YY', XY'+YX'),$$
which is such that the map $\phi: {\bf G}({\bf A})\rightarrow \Bsl(n,\C)$, $(X,Y)\mapsto X+\sqrt[4]{2}Y$ is a morphism. With this structure, ${\bf G}$ is an algebraic group, which is clearly defined over $\Q(\sqrt{2})$. Let $\tau$ be the nontrivial field automorphism of $\Q(\sqrt{2})/\Q$, 
it can be checked that the map $\phi$ is an  isomorphism between ${\bf G}({\bf R})$ and $\Bsl(n,\R)$, and that moreover $\phi': {\bf G^\tau}({\bf \R}) \rightarrow \Bsl(n,\C)$, $(X,Y)\mapsto X+i\sqrt[4]{2}Y$
is an isomorphism onto ${\bf SU}(n)$. Let ${\bf H}=Res_{\Q(\sqrt{2})/\Q}{\bf G}={\bf G}\times {\bf G}^\tau$. Then ${\bf H}$ is defined over $\Q$  (see for example \cite[6.1.3]{Z}, for definition and properties of the restriction of scalars functor). It follows from a Theorem of Borel and Harish-Chandra \cite[Theorem 3.1.7]{Z} that ${\bf H}(\Z)$ is a lattice in ${\bf H}(\R)$. Since ${\bf SU}(n)$ is compact, it follows that the projection of ${\bf H}(\Z)$ onto the first factor of ${\bf G}(\R) \times {\bf G}^\tau(\R)$ is again a lattice. Using the isomorphism between ${\bf G}(\R)$ and $\Bsl(n,\R)$, this projection can be identified with
$${\bf G}( \Z[\sqrt{2}])={\bf SU}(n,\Z[\sqrt{2}]+\sqrt[4]{2}\Z[\sqrt{2}],\sigma)={\bf SU}(n,\Z[\sqrt[4]{2}],\sigma).$$

\subsection{Proof of Theorem \ref{maintheorem}, part (2).} 

 Note that, as stated implicitely in Section \ref{secondpart}, 
 $$\varphi(\Gamma_1 \times \Gamma_2 \times (\log \alpha) \Z ) \subset \Gamma \cap M,$$
  so $\Gamma \cap M$ is a lattice in $M$, and $M/(M\cap \Gamma)$ is a closed, $A$-invariant subset of $G/\Gamma$. Notice also that the map $\Psi$ defined by Equation (\ref{defpsi}) defines an embedding $\overline{\Psi}: G_1\times G_2/\Gamma_1\times \Gamma_2 \rightarrow G/\Gamma$.

 Assume $\overline{Ax}$ is homogeneous, that is $\overline{Ax}=Fx$ for a closed connected subgroup $F$ of $G$. Since $Ax \subset M/(M\cap \Gamma)$, which is closed in $G/\Gamma$, Lemma \ref{subgroup} applied twice gives that $A \subset F \subset M$. Let $F'=F \cap \Psi(G_1\times G_2)$,  again by Lemma \ref{intersect}, $F'x$ is a closed subset of $Im(\overline{\Psi})$. Since $A_1 \subset F'$, $\Psi(A_1)x \subset F'x$ and Lemma \ref{smaller} implies that $F'=\Psi(G_1\times G_2)$. Since $A$ contains $\varphi(e,e,t)$ for all $t \in \R$, we have $M=AF' \subset F$ so $F=M$ necessarily.

 By Lemma \ref{notdense}, the $A_1$-orbit of $(y_1,y_2)$ is not dense; the topological transitivity of the action of $A_1$ on $G_1\times G_2/\Gamma_1\times \Gamma_2$ implies that moreover the closure of this orbit has empty interior. Thus, the $A_1\times \R$-orbit of $(y_1,y_2,0)$ is also nowhere dense in $G_1\times G_2 \times \R/\Gamma_1 \times \Gamma_2 \times (\log \alpha) \Z$. The map $\overline{\varphi}$ being a finite covering, the $A$-orbit of $x$ is nowhere dense. This is a contradiction with $F=M$.
 
   Now assume $\overline{Ax}$ fibers over the orbit of a one-parameter non-{\bf Ad}-unipotent subgroup. Let $F$ be a closed connected subgroup, $L$ a Lie group and $\phi:F\rightarrow L$ a continuous epimorphism satisfying the (b) of the conjecture. Let $F'=F \cap \Psi(G_1\times G_2)$ and $F''=F \cap M$, we have $A_1\subset F'$ and $A \subset F''$. Similarly, $F'x$ and $F''x$ are closed in $G/\Gamma$. Again, by Lemma \ref{smaller}, $F'=\Psi(G_1\times G_2)$ necessarily, and like before, $AF'\subset F'' \subset M$ so $F''=M$. 
   
   Let $H=Ker(\phi \circ \varphi)\subset G_1\times G_2\times \R$, so $A_1\times \R /(A_1\times \R \cap H)$ is a one-parameter group. This time, possibilities for the closed normal subgroup $H$ are: finite $\times \Lambda$, $G_1\times G_2\times \Lambda$, $G_1 \times \mathrm{finite} \times \Lambda$ and $\mathrm{finite} \times G_2 \times \Lambda$, where $\Lambda$ is a closed subgroup of $\R$. Of all these possibilities, only $G_1\times G_2 \times \Lambda$, where $\Lambda$ is discrete, has the required property that $A_1\times \R /(A_1\times \R \cap H)$ is a one-parameter group. This proves that $\Psi(G_1 \times G_2) \subset Ker(\phi)$, so $F \subset N_G(\Psi(G_1 \times G_2))$. However, the normalizer of $\Psi(G_1 \times G_2)$ in $G$ is the group of block matrices having for connected component of the identity the group $M$. So by connectedness of $F$, $F\subset M$, and since $M=F''\subset F$, we have $F=M$. Thus $L=F/Ker(\phi)=\R/\Lambda$ is abelian, and a fortiori every element of $L$ is unipotent; this contradicts (b).

\section{Proof of Theorem \ref{toralexample}}
\label{sectionproof2}

 The proof of Theorem \ref{toralexample} is divided in two independent lemmas.
 
 \begin{lem} The family $(z_1,\dots,z_{2q},1)$ is linearly independent over $\Q$.
 \end{lem}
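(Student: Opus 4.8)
The plan is to argue by contradiction, exploiting the extremely rapid growth of the exponents $N^m$ together with the hypothesis $N > q\log p_q/\log p_1$ (equivalently $N\log p_1 > q\log p_q$). A $\Q$-linear relation among $z_1,\dots,z_{2q},1$ can be scaled to have integer coefficients, so it suffices to show that if $c_0,c_1,\dots,c_q,d_1,\dots,d_q$ are integers with $c_0 + \sum_{j=1}^{q} c_j z_j + \sum_{j=1}^{q} d_j z_{q+j}=0$, then all of them vanish.

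The first step is truncation. For $K\geq 1$ set $z_j^{(K)}=\sum_{k=1}^{K} p_j^{-N^{2k}}$ and $z_{q+j}^{(K)}=\sum_{k=1}^{K} p_j^{-N^{2k+1}}$ for $1\leq j\leq q$. The rational number $r_K:=c_0 + \sum_j c_j z_j^{(K)} + \sum_j d_j z_{q+j}^{(K)}$ has denominator dividing $D_K:=\prod_{j=1}^{q} p_j^{N^{2K+1}}$, and by the assumed relation $r_K = -\sum_j c_j\,(z_j - z_j^{(K)}) - \sum_j d_j\,(z_{q+j} - z_{q+j}^{(K)})$. Each tail is bounded by twice its leading term, the largest of which is $p_1^{-N^{2K+2}}$, so $|r_K|\leq 2C\,p_1^{-N^{2K+2}}$ where $C=\sum_j|c_j|+\sum_j|d_j|$. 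Since $D_K\leq p_q^{qN^{2K+1}}$, the integer $D_K r_K$ satisfies $|D_K r_K|\leq 2C\exp\big(N^{2K+1}(q\log p_q - N\log p_1)\big)$, which tends to $0$ as $K\to\infty$ because $N\log p_1 > q\log p_q$. An integer of absolute value less than $1$ is $0$, so $r_K=0$ for all large $K$.

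The second step is to take successive differences. Subtracting $r_K=0$ from $r_{K+1}=0$ yields, for all large $K$, the identity $\sum_{j=1}^{q} c_j p_j^{-N^{2K+2}} + \sum_{j=1}^{q} d_j p_j^{-N^{2K+3}}=0$. Multiplying through by $\prod_j p_j^{N^{2K+2}}$ and observing that the $d_j$-contribution is again $O\big(\exp(N^{2K+2}(q\log p_q - N\log p_1))\big)\to 0$, the same integrality argument forces $\sum_j c_j p_j^{-N^{2K+2}}=0$, and hence also $\sum_j d_j p_j^{-N^{2K+3}}=0$, for all large $K$. Finally, in $\sum_j c_j p_j^{-N^{2K+2}}=0$ I multiply by $p_1^{N^{2K+2}}$ to get $c_1 = -\sum_{j\geq 2} c_j (p_1/p_j)^{N^{2K+2}}$; letting $K\to\infty$ and using $p_1<p_j$ forces $c_1=0$, and iterating over $p_2,\dots,p_q$ gives $c_1=\dots=c_q=0$. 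The same peeling applied to $\sum_j d_j p_j^{-N^{2K+3}}=0$ gives $d_1=\dots=d_q=0$, and then $r_K=0$ forces $c_0=0$, the desired contradiction.

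The only delicate point is the bookkeeping of denominators — correctly identifying $D_K=\prod_j p_j^{N^{2K+1}}$ and then $\prod_j p_j^{N^{2K+2}}$ as the relevant moduli at each stage — and verifying that the single inequality $N\log p_1 > q\log p_q$ makes every error term decay to $0$; all the estimates themselves are elementary geometric-series bounds.
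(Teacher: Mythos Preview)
Your proof is correct and follows essentially the same route as the paper: clear denominators by $\prod_j p_j^{N^{2K+1}}$, use $N\log p_1>q\log p_q$ to force the tail to vanish, then take successive differences and peel off coefficients. The paper compresses your last two steps into the single sentence ``the $p_i$ being distinct, this implies $a_i=b_i=0$,'' whereas you spell out the separation of the $c_j$- and $d_j$-sums via a second integrality argument and then the one-by-one peeling; this extra step is harmless but not needed, since the $2q$ bases $p_1,\dots,p_q,p_1^N,\dots,p_q^N$ are already pairwise distinct (because $p_1^N>p_q$), so a single peeling on $\sum_j c_j p_j^{-M}+\sum_j d_j p_j^{-NM}=0$ suffices.
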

\begin{proof}
 Consider a linear combination:
 $$\sum_{i=1}^{q} a_iz_i +b_iz_{i+q} = c.$$
 We can assume that $a_i,b_i$ and $c$ are integers. Let $k_0\geq 1$, write
 \begin{multline}\label{equationun}
 \left( \prod_{i=1}^q  p_i\right)^{N^{2k_0+1}} \left( \sum_{i=1}^{q} \sum_{k=1}^{k_0} a_i p_i^{-N^{2k}} + b_i p_i^{-N^{2k+1}} -c \right) =\\
 -\left( \prod_{i=1}^q  p_i\right)^{N^{2k_0+1}} 
 \left( \sum_{i=1}^{q} \sum_{k\geq k_0+1} a_i p_i^{-N^{2k}} + b_i p_i^{-N^{2k+1}}  \right) .
 \end{multline}
It is clear the left hand side is an integer. Since $1<p_1<\dots<p_q$, the right hand side is less in absolute value than
 \begin{multline*}
 p_q^{q N^{2k_0+1}}  2q \sup_i (|a_i|,|b_i|) \sum_{k \geq 0} \left( p_1^{-N^{2k_0+2}}\right)^{N^{2k}}\\
\leq  4q \sup_i (|a_i|,|b_i|) p_q^{q N^{2k_0+1}}  p_1^{-N^{2k_0+2}}\\
\leq 4q \sup_i (|a_i|,|b_i|) \exp( N^{2k_0+1} (q \log p_q - N \log p_1)).
\end{multline*}
Since $N>q\frac{\log(p_q)}{\log(p_1)}$, the last expression tends to zero . This proves the right-hand side of (\ref{equationun}) is zero for large enough $k_0$, so for all large $k$,
 $$\sum_{i=1}^{q} a_ip_i^{-N^{2k}} +b_i p_i^{-N^{2k+1}} = 0.$$
 The $p_i$ being distincts, this implies that for $i\in \{1,..,q\}$, $a_i=b_i=0$.
\end{proof}

 The following Lemma implies easily that the orbit of $z$ under $\Omega$ cannot be dense.

 \begin{lem} For all $\epsilon>0$, there exists $L>0$, such that for all $n_1,..,n_q \geq 0$ with $\sum_{i=1}^q n_i \geq L$, there exists $j \in \{1,\dots, 2q\}$ such that $p_1^{n_1}\cdots p_q^{n_q} z_j$ lies in the interval $[0,\epsilon]$ modulo $1$.
 \end{lem}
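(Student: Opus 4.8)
The plan is this: given $\epsilon>0$ and a multi-index $(n_1,\dots,n_q)$ with $n:=\sum_\ell n_\ell$ large, I want to produce a single coordinate $z_\ell$ for which multiplying by $m:=p_1^{n_1}\cdots p_q^{n_q}$ leaves only a minuscule fractional part. The elementary observation underlying everything is that $m\,z_\ell\bmod 1$ equals the tail of the defining series of $z_\ell$ formed by the exponents exceeding the exponent of the relevant prime in $m$: if $z_i=\sum_{k\ge1}p_i^{-N^{2k}}$ for $i\le q$, then every term $m\,p_i^{-N^{2k}}$ with $N^{2k}\le n_i$ is a nonnegative integer, so
$$m\,z_i\ \equiv\ \sum_{k:\,N^{2k}>n_i} m\,p_i^{-N^{2k}}\pmod 1,$$
and likewise $m\,z_{i+q}\equiv\sum_{k:\,N^{2k+1}>n_i}m\,p_i^{-N^{2k+1}}\pmod 1$. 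Both right-hand sides are nonnegative, so they equal the respective fractional parts as soon as they are $<1$, which the estimate below guarantees.

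First I would fix $i$ with $n_i=\max_\ell n_\ell$, so that $n_i\ge n/q$; for $n$ large this forces $n_i\ge N^2$, hence there is a unique integer $a\ge2$ with $N^a\le n_i<N^{a+1}$. The combinatorial heart of the argument is the remark that $N^{a+1}$, the least power of $N$ strictly above $n_i$, lies in exactly one of the two lists of exponents $\{N^{2k}\}_{k\ge1}$ (appearing in $z_i$) and $\{N^{2k+1}\}_{k\ge1}$ (appearing in $z_{i+q}$); choosing $z_\ell\in\{z_i,z_{i+q}\}$ to be the coordinate attached to the \emph{other} list, one checks that this list contains no exponent in the interval $(n_i,N^{a+2})$, so its least exponent exceeding $n_i$ is $N^{a+2}=N\cdot N^{a+1}>N\,n_i$. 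This is exactly why one works on a $2q$-dimensional torus with the exponents split between even and odd powers of $N$: a single block of all powers $N^2,N^3,\dots$ would only guarantee a next exponent within a factor $N$ of $N^a$, hence possibly only comparable to $n_i$, which is too weak.

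With $\ell$ chosen, I would bound the tail $T:=\{m\,z_\ell\}$ crudely by summing over all integers $s\ge N^{a+2}$:
$$T\ \le\ \sum_{s\ge N^{a+2}} m\,p_i^{-s}\ =\ m\,p_i^{-N^{a+2}}\,\frac{1}{1-p_i^{-1}}\ \le\ 2\,m\,p_i^{-N^{a+2}},$$
using $p_i\ge2$. Taking logarithms and using $\log m=\sum_\ell n_\ell\log p_\ell\le n\log p_q$, $\log p_i\ge\log p_1$ and $N^{a+2}>N\,n_i\ge Nn/q$,
$$\log T\ \le\ \log 2+n\log p_q-\frac{N}{q}\,n\log p_1\ =\ \log 2-\delta\,n,\qquad \delta:=\frac{N}{q}\log p_1-\log p_q.$$
The hypothesis $N>q\,\frac{\log p_q}{\log p_1}$ — the very inequality that drove the previous lemma — says precisely that $\delta>0$, so $T\le 2e^{-\delta n}$. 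Taking $L\ge\max\!\bigl(qN^2,\ \delta^{-1}\log(2/\epsilon)\bigr)$ then yields $T\le\epsilon$ whenever $n\ge L$; since $0\le T<1$ we get $m\,z_\ell\in[0,\epsilon]\bmod 1$, which is the assertion with $j=\ell$.

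The one step that must be executed with care is the combinatorial choice of $\ell$ in the second paragraph and the verification that the chosen exponent list genuinely skips the interval $(n_i,N^{a+2})$ — including the small-$n$ bookkeeping ensuring $a\ge2$ and that $N^a,N^{a+2}$ really occur in that list. Everything after that is a one-line geometric-series estimate.
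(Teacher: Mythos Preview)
Your argument is correct and follows essentially the same approach as the paper's proof: pick the ``dominant'' index, locate the current scale between two consecutive powers of $N$, use the parity of that power to select between $z_i$ and $z_{i+q}$ so that the next surviving exponent is at least $N$ times the scale, and then kill the tail via the hypothesis $N>q\log p_q/\log p_1$. The only cosmetic differences are that the paper selects the index $s$ maximizing $p_s^{n_s}$ rather than $n_s$, and packages the final bound as $2\,p_s^{N^{2k_0+1}(q-N)}$ rather than $2e^{-\delta n}$; both choices lead to the same conclusion.
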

\begin{proof}
 Let $s \in \{1,\dots, q\}$ such that for all $r \in \{1,\dots, q\}$, $p_s^{n_s}\geq p_r^{n_r}$.
  Let $k_0$ be the integer part of $\log(n_s)/2\log(N)$, then either $N^{2k_0} \leq n_s \leq N^{2k_0+1}$, or $N^{2k_0+1}\leq n_s \leq N^{2k_0+2}$. In the first case, take $j=s$, then:
  $$p_1^{n_1} \cdots p_q^{n_q} z_j= p_1^{n_1} \cdots p_q^{n_q} \sum_{k \geq 1} p_s^{-N^{2k}}= p_1^{n_1} \cdots  p_q^{n_q}\sum_{k \geq k_0+1} p_s^{-N^{2k}} \, \mathrm{mod} \, 1.$$
 We have
  $$\sum_{k \geq k_0+1} p_s^{-N^{2k}} \leq 2 p_s^{-N^{2k_0+2}},$$
 so, using the fact that for all $r \in \{1,\dots,q\}$, $p_r^{n_r} \leq p_s^{n_s} \leq p_s^{N^{2k_0+1}}$, we obtain:
  $$p_1^{n_1} \cdots p_q^{n_q}\sum_{k \geq k_0+1} p_s^{-N^{2k}} 
  \leq 2 p_s^{qN^{2k_0+1}-N^{2k_0+2}}\leq 2 p_s^{N^{2k_0+1}(q-N)} ,$$
  but by hypothesis we have $N>q\frac{\log(p_q)}{\log(p_1)}>q$, so the preceding bound is small whenever $k_0$ is large. Because of the definition of $k_0$, we have
  $$k_0\geq \frac{\log \frac{\sum_{i=1}^q n_i \log p_i}{q\log p_q}}{2\log N}\geq \frac{\log \frac{L \log p_1}{q\log p_q}}{2 \log N},$$
  so $k_0$ is arbitrary large when $L$ is large.
  
  In the second case $N^{2k_0+1}\leq n_s \leq N^{2k_0+2}$, one can proceed similarly with $j=s+q$.
    
\end{proof}

\section{Acknowledgements}

 I am indebted to Yves Guivarc'h for mentioning to me the problem of orbit closure in the toral endomorphisms setting, and to Livio Flaminio for numerous useful comments. I also 
 thank Fran\c coise Dal'Bo, and S\'ebastien Gou\"ezel for stimulating conversations on this topic.
  

\end{document}